\newcommand{\IN}{{\mathbb{N}}}
\newcommand{\IZ}{{\mathbb{Z}}}
\newcommand{\fp}{{\mathfrak{p}}}     
\newcommand{\cO}{{\mathcal{O}}}
\newcommand{\bA}{{\mathbf{A}}}
\newcommand{\bB}{{\mathbf{B}}}
\newcommand{\bb}{{\mathbf{b}}}
\newcommand{\bc}{{\mathbf{c}}}
\newcommand{\dade}{{\mathbf{D}_k}}
\DeclareMathOperator{\End}{End}               
\DeclareMathOperator{\Hom}{Hom}             
\DeclareMathOperator{\Res}{Res}               
\DeclareMathOperator{\Ind}{Ind}                  
\DeclareMathOperator{\Inf}{Inf}                    
\DeclareMathOperator{\Iso}{Iso}    		
\DeclareMathOperator{\Irr}{Irr}
\DeclareMathOperator{\soc}{soc}
\DeclareMathOperator{\head}{head}
\DeclareMathOperator{\rad}{rad}
\DeclareMathOperator{\Capp}{Cap}
\newcommand{\lconj}[2]{\,^{#1}\!#2}                     
\let\lra=\longrightarrow
\let\wh=\widehat
\newtheorem{thm}{Theorem}[section]
\newtheorem{lem}[thm]{Lemma}
\newtheorem{cor}[thm]{Corollary}
\newtheorem{prop}[thm]{Proposition}
\theoremstyle{theorem}
\newtheorem*{thm0}{Theorem}
\theoremstyle{theorem}
\theoremstyle{definition}
\theoremstyle{remark}
\newtheorem{rem}[thm]{Remark}
\begin{document}


\title[The classification of the trivial source modules in blocks with cyclic defect groups]{The classification of the trivial source modules in blocks with cyclic defect groups}

\date{\today}

\author{Gerhard Hiss}
\author{Caroline Lassueur}
\thanks{Corresponding author: Caroline Lassueur}

\address{{\sc Gerhard Hiss},  Lehrstuhl D f\"ur Mathematik, RWTH Aachen, 52056 Aachen, Germany.}
\email{gerhard.hiss@math.rwth-aachen.de}
\address{{\sc Caroline Lassueur}, FB Mathematik, TU Kaiserslautern, Postfach 3049, 67653 Kaiserslautern, Germany.}
\email[corresponding author]{lassueur@mathematik.uni-kl.de}

\keywords{Blocks with cyclic defect groups, Brauer trees, trivial source modules, $p$-permutation modules, liftable modules, vertices and sources, stable Auslander-Reiten quiver, source algebras, endo-permutation modules}

\subjclass[2010]{Primary 20C20.}

\begin{abstract} 
Relying on the classification of the indecomposable liftable modules in arbitrary blocks with non-trivial cyclic defect groups  by the first author and Naehrig we give a complete classification of the trivial source modules lying in such  blocks,  describing in particular  their associated path on the Brauer tree of the block in the sense of Janusz' classification of the indecomposable modules of such blocks.  Furthermore, the appendix contains a minor correction to the statement of the classification theorem of the indecomposable  liftable modules, as well as a description of  the minimal distance  from an arbitrary indecomposable liftable  module  to the boundary of the stable Auslander-Reiten quiver of the block. 
\end{abstract}

\thanks{Both authors gratefully acknowledge financial support by DFG SFB-TRR 195. This article is part of Project~A18 thereof.}

\maketitle


\pagestyle{myheadings}
\markboth{
The classification of the trivial source modules in blocks with cyclic defect groups}{The classification of the trivial source modules in blocks with cyclic defect groups}

\vspace{6mm}
\section{Introduction}
We consider a finite group $G$ and  an algebraically closed field $k$ of characteristic $p>0$.  The purpose of the present article is to provide an explicit classification of the indecomposable trivial source modules lying in a $p$-block $\bB$  of $kG$ with a non-trivial cyclic defect group $D\cong C_{p^n}$ ($n\geq 1$)  using Janusz' parametrisation  \cite[\S5]{Jan69} of the non-projective, non-simple indecomposable {$\bB$-modules} through an associated path on the Brauer tree $\sigma(\bB)$ of~$\bB$ together with a direction and a multiplicity as described in \cite{BleChi},  and which we briefly recall in Appendix~\ref{AppA}.  
\par
 The theory of blocks with cyclic defect groups originated with the work of Brauer \cite{Bra41,Bra42} and was later extended by a series of articles by Dade \cite{Dad66}, Junusz \cite{Jan69}, Kupisch \cite{Kup69}, Peacock \cite{Pea75,Pea77}, Feit \cite{Fei84}, Green \cite{GreenWalk}, \ldots, which  encode the structure of the Morita equivalence class of such blocks through  their Brauer trees. 
\par
As it turns out trivial source modules are not preserved by Morita equivalences in general, but they are preserved by the stronger source algebra equivalences (also called Puig equivalences). Thus, in order to achieve our aim, we use much more recent techniques. The first main ingredient  is the classification of cyclic blocks up to source algebra equivalence by Linckelmann \cite{LinckThese, LinckBourbaki, Linck96}, and in particular the description of the interior $D$-algebra structure of their source algebras. The second main ingredient is a description by Bleher-Chinburg \cite{BleChi} of the location  in the stable Auslander-Reiten quiver $\Gamma_s(\bB)$ of the block $\bB$ of an arbitrary non-projective indecomposable $\bB$-module given by its path. The third main ingredient is the classification of the indecomposable liftable $\bB$-modules by the first author and Naehrig \cite{HN12}.
\par
 In order to introduce our main results, let us fix some notation. For each $0\leq i\leq n$, let $D_i$ denote the unique cyclic subgroup of~$D$ of order~$p^i$  and let $\bb$ be the Brauer correspondent of~$\bB$ in $N_G(D_1)$. Then, by Linckelmann's results  \cite[Theorem~2.7]{Linck96},  any source algebra of $\bB$ (they are in fact all isomorphic as interior $D$-algebras) is determined, as interior $D$-algebra, by two parameters: 
\begin{itemize}
  \item[1.] a strengthened version of its Brauer tree, which we will see as $\sigma(\bB)$ together with a sign function, assigning a plus or a minus sign to each vertex of $\sigma(\bB)$ in an alternate way;  
  \item[2.] an  indecomposable  endo-permutation $kD$-module~$W$ isomorphic to a $kD$-source of the simple $\bb$-modules, and hence on which $D_1$ acts trivially. 
\end{itemize}
By \cite{Dad66} all indecomposable capped endo-permutation $kD$-modules on which~$D_1$ acts trivially arise as a source of the simple $\bb$-modules for some cyclic block $\bB$, and it follows from the classification of the endo-permutation modules, that there are, up to isomorphism, $2^{n-1}$ (resp.~$2^{n-2}$ if $p=2$) possibilities for $W$.
We also recall that  $\Gamma_s(\bB)$ is a graph whose set of vertices is  the set of isomorphism classes of non-projective indecomposable $\bB$-modules and  is a tube of shape $(\IZ/e\IZ)A_{|D|-1}$, where~$e$ is the inertial index of $\bB$. (See \cite[\S 4.13-4.17 and \S6.5]{BensonBookI}). 
\par
Our first main result, Theorem~\ref{thm:mainA}, describes the location in $\Gamma_s(\bB)$ of the non-projective indecomposable trivial source $\bB$-modules with vertex $D_i$  for each $1\leq i\leq n$ in terms of their distance to one of the boundaries, this in function of the strengthened Brauer tree of $\bB$, and the endo-permutation $kD$-module $W$ described above.  
This module $W$ is parametrised by an $n$-tuple $(a_0, \ldots , a_{n-1})$
of integers $a_i \in \{ 0, 1 \}$, $1 \leq i \leq n - 1$, with $a_0 = 0$. For 
each $1 \leq i \leq n$, let $\ell_i$ denote the dimension of the unique 
indecomposable direct summand of $\Res_{D_i}^D( W )$ with vertex~$D_i$. One 
of the boundaries of $\Gamma_s( \mathbf{B} )$ consists of indecomposable 
modules which lift to characteristic~$0$ in such a way, that the characters 
of these lifts take positive integer values on the generators of~$D_1$. 
If~$X$ is an indecomposable $\mathbf{B}$-module, we write $d^+( X )$ for 
the distance of~$X$ to this boundary in $\Gamma_s( \mathbf{B} )$. (We will 
give more precise definitions for these concepts and references for the
claims made above later on in our paper.) With these concepts we can  state our 
first main result as follows.

\begin{thm0}[Theorem~\ref{thm:mainA}] With the above notation, the following holds.
Let $i_0 < i_1 < \cdots < i_s$ be the indices such that $a_{i_0}=\ldots=a_{i_s}=1$ and $a_i=0$ if ${i\in\{1,\ldots,n-1\}\setminus\{i_0,\ldots, i_s\}}$. 
Let $1\leq i\leq n$ and let $X$ be  a non-projective indecomposable trivial source $\bB$-module with vertex~$D_i$.  Then
$$d^+(X)=\ell_i\cdot p^{n-i}-1\,,$$
and  $\ell_i=\sum_{0\leq i_j<i}(-1)^j p^{i-i_j}+(-1)^{|\{j\mid 0\leq i_j<i\}|}$\,.
\end{thm0}

Our second main result, Theorem~\ref{thm:mainB}, provides us with a classification of the indecomposable trivial source $\bB$-modules with vertex $D_i$ for each $1\leq i\leq n$ in terms of  their path on $\sigma(\bB)$, direction, and multiplicity.  Indecomposable trivial source modules with vertex $D_0=\{1\}$ are simply the projective indecomposable modules of $\bB$ and their structure is well-known. (See~\S\ref{ssec:PIMs}.)
We postpone the precise statement of this classification to Section~5 as it is rather long and complex.
\par
We note that several authors had already obtained some partial results in the direction of our two main results.  In \cite{Mic75} Michler characterises trivial source modules in blocks with cyclic defect groups through ring-theoretic properties. In \cite{BessenrodtCyclic} Bessenrodt describes the locations of the set of indecomposable modules with a given vertex $1\lneq D_i\leq D$ in $\Gamma_s(\bB)$, but does not deal with the sources of the modules. Finally, in \cite{KK10} Koshitani and Kunugi deal with properties of the characters of trivial source modules in the case in which $W=k$ is the trivial $kD$-module.   
However, none of these articles achieves a full classification of the indecomposable trivial source $\bB$-modules. 
\par
The paper is built up as follows. In Section~\ref{sec:prelim} we introduce our general notation and preliminary results on trivial source modules and endo-permutation modules.  In Section~\ref{sec:cyclicblocks} we recapitulate known results  on cyclic blocks, which we will use throughout. In Section~\ref{sec:location}, we provide the main steps of the proof of our first main result on the location of the trivial source $\bB$-modules in $\Gamma_s(\bB)$ through a reduction to a source algebra of a block of the centraliser $C_G(D_1)$ covered by the block $\bb$. In Section~\ref{sec:mainresults} we state and prove our two main results. Finally, Appendix~A contains a minor correction to the statement of the main theorem of~\cite{HN12}, which applies to the situation that the exceptional vertex is a leaf of the Brauer tree, whereas Appendix~B contains a full description of the distances between the indecomposable liftable $\bB$-module and  the boundary $\Gamma_s(\bB)$.\\


\vspace{6mm}
\section{Preliminaries}\label{sec:prelim}

\subsection{General notation}

Throughout we let $p$ be a prime number and  $G$ be a finite group of order divisible by~$p$. 
We let $(K,\cO,k)$ be a $p$-modular system, where  $\cO$ denotes a complete discrete valuation ring of characteristic zero with unique maximal ideal $\frak{p}:=J(\cO)$, algebraically closed residue field $k:=\cO/\fp$ of characteristic $p$, and field of fractions $K=\text{Frac}(\cO)$, which we assume to be large enough for $G$ and its subgroups in the sense that $K$ contains a root of unity of order $\exp(G)$, the exponent of~$G$. 
Unless otherwise stated, for $R\in\{\cO,k\}$, $RG$-modules are assumed to be finitely generated left $RG$-lattices, that is free as $R$-modules, and by a block~$\bB$ of~$G$, we mean a block of~$kG$. 
Given a subgroup $H\leq G$, we let $R$  denote the trivial $RG$-lattice, we write $\Res^G_H(M)$ for the restriction of the $RG$-lattice $M$ to $H$, and  $\Ind_H^G(N)$ for the induction of the $RH$-lattice $N$ to $G$. Given a normal subgroup $U$ of $G$, we write $\Inf_{G/U}^{G}(M)$ for the inflation of the $R[G/U]$-module $M$ to $G$.  We write $M^{\ast}:=\Hom_k(M,k)$ for the $k$-dual of a $kG$-module $M$,  $\soc(M)$ for its socle,  $\head(M)$ for  its head, and  $\rad(M)$ for its radical. 
If $M$ is a uniserial $kG$-module, then we denote by $\ell(M)$ its  composition length. 
We let $\Omega$ denote the  usual Heller operator. We denote by $\Irr(G)$ (resp. $\Irr(\bB)$)  the set of irreducible $K$-characters  of $G$ (resp. of the block $\bB$ of $kG$). 
We write $\bB_0(kG)$ for the principal block of $kG$. 
We recall that the reduction modulo $\fp$ of an $\cO G$-lattice $L$  is $L/\mathfrak{p}L\cong k\otimes_{\cO}L$, and a $kG$-module $M$ is said to be liftable if there exists an $\cO G$-lattice $\widetilde{M}$ such that $M\cong \widetilde{M}/\mathfrak{p}\widetilde{M}$.
Finally, we assume that the reader is acquainted with the terminology of equivalences of block algebras, such as Morita equivalences and source-algebra equivalences. We refer to \cite{LinckBook, ThevenazBook} for more details.\\

\subsection{Trivial source lattices}
An indecomposable $RG$-lattice $M$ with vertex $Q\leq G$ is called a \emph{trivial source $RG$-lattice} if the trivial $RQ$-lattice $R$ is a source of $M$.  Equivalently,  an indecomposable $RG$-lattice is a trivial source $RG$-lattice if and only if it is a direct summand of a permutation $RG$-lattice, and hence such lattices are also sometimes called \emph{$p$-permutation} $RG$-lattices. We adopt the convention that  \emph{trivial source $RG$-lattices} are indecomposable by definition.
\par
It is well-known that any trivial source $kG$-module $M$ is liftable to an $\cO G$-lattice (see e.g. \cite[Corollary 3.11.4]{BensonBookI}). More accurately, in general, such modules afford several lifts, but there is a unique one amongst these which is a trivial source $\cO G$-lattice. We denote this trivial source lift by $\wh{M}$ and by $\chi_{\wh{M}}$ the ordinary character afforded by $\wh{M}$, that is the character of $K\otimes_{\cO}\wh{M}$.
Character values of trivial source lattices have the following properties.

\begin{lem}[{}{\cite[Lemma II.12.6]{LandrockBook}}]\label{lem:tscharacters}
Let $M$ be a trivial source $kG$-module with character $\chi_{\wh{M}}$,  and let $x$ is a $p$-element of $G$. Then:
\begin{enumerate}
\item[\rm(a)] $\chi_{\wh{M}}(x)$ equals the number of indecomposable direct summands of $\Res^{G}_{\langle x \rangle}(M)$ isomorphic to the trivial $k{\langle x \rangle}$-module. In particular,  $\chi_{\wh{M}}(x)$ is a non-negative integer.
\item[\rm(b)] $\chi_{\wh{M}}(x)\neq 0$ if and only if $x$ belongs to some vertex of $M$.
\end{enumerate}
\end{lem}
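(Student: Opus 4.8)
The plan is to reduce everything to the cyclic $p$-group $P := \langle x \rangle$ by restriction, and to exploit that the character of the trivial source lift $\wh M$ is stable under reduction in the sense that $\chi_{\wh M}(x)$ can be computed from the Brauer character / ordinary character interplay on the $p$-section of $x$. First I would note that since $M$ is a trivial source $kG$-module, $\Res^G_P(M)$ is a $p$-permutation $kP$-module, hence a direct sum of modules of the form $\Ind_{Q}^P(k)$ for subgroups $Q \le P$; since $P$ is cyclic these are exactly the indecomposable $kP$-modules that are permutation modules, and each $\Ind_Q^P(k)$ has a unique trivial $kP$-summand precisely when $Q = P$ (i.e. when the summand is $k$ itself) — more precisely, I would instead use that the trivial source lift commutes with restriction, so $\Res^G_P(\wh M)$ is the trivial source $\cO P$-lift of $\Res^G_P(M)$, and then compute $\chi_{\wh M}(x)$ there.

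For part (a), the key computational input is the classical fact that for a permutation $\cO P$-lattice $\cO[P/Q]$ one has $\chi_{\cO[P/Q]}(x) = |\mathrm{Fix}_x(P/Q)|$, which equals $|P/Q|$ if $Q = P$ and $0$ otherwise, because $x$ generates $P$ and so fixes a coset $gQ$ iff $gQg^{-1} \supseteq P$ iff $Q = P$. Writing $\Res^G_P(\wh M) \cong \bigoplus_j \wh{U_j}$ with each $U_j$ indecomposable $p$-permutation, each $\wh{U_j}$ is a summand of some $\cO[P/Q_j]$, and one checks $\chi_{\wh{U_j}}(x)$ is $1$ if $U_j \cong k$ (vertex $P$, i.e. $Q_j = P$ forces the indecomposable summand to be the trivial module itself) and $0$ otherwise. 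Summing gives that $\chi_{\wh M}(x)$ counts exactly the trivial direct summands of $\Res^G_P(M)$; in particular it is a non-negative integer. The slightly delicate point to get right is the identification of which indecomposable summands of a transitive permutation module $\cO[P/Q]$ are trivial: one argues that $k$ occurs in $\Res^G_P(M)$ with multiplicity equal to $\dim \mathrm{Fix}_P(M) = \langle \chi_{\wh M}, 1_P\rangle_P$ evaluated via the averaging idempotent, and then pins down that this inner product equals $\chi_{\wh M}(x)$ using the vanishing of $\chi_{\wh M}$ on non-identity $p$-elements coming from the permutation-module structure.

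For part (b), the statement $\chi_{\wh M}(x) \neq 0 \iff x$ lies in a vertex $Q$ of $M$ follows by combining (a) with Green's theory: if $x \in Q$ then $\Res^G_P(M)$ has $\Res^G_P$ of a source-type summand, hence a nonzero trivial $kP$-summand by Mackey applied to $\Ind_Q^G(k)\mid$-summands, so $\chi_{\wh M}(x) > 0$; conversely, if $\Res^G_P(M)$ has a trivial summand $k = \Ind_P^P(k)$, then by the theory of vertices $P$ is contained in a vertex of $M$ (a summand of $\Res$ has vertex contained in a vertex of the original module only after conjugation — here one uses that $k\mid \Res^G_P M$ forces $P \le_G Q$). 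I expect the main obstacle to be the bookkeeping in part (b): making the "$x$ belongs to some vertex" direction precise requires being careful about $G$-conjugacy of vertices versus the fixed subgroup $P = \langle x\rangle$, and invoking the right form of Mackey's formula together with the fact (from Green correspondence or from \cite[Corollary 3.11.4]{BensonBookI}) that trivial source modules restricted to a $p$-subgroup again have all indecomposable summands with the trivial module as source. Since this is a cited lemma, I would in practice simply refer to \cite[Lemma II.12.6]{LandrockBook} and sketch only the reduction to $\langle x \rangle$ as above.
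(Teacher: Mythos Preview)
The paper does not prove this lemma at all: it is stated with a citation to \cite[Lemma II.12.6]{LandrockBook} and used as a black box, so there is nothing to compare your argument against in the paper itself. Your instinct at the end of the proposal --- to simply cite Landrock and perhaps sketch the reduction to $\langle x\rangle$ --- is exactly what the authors do (minus the sketch).

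That said, your outline of the proof is essentially the standard one and is correct in its broad strokes. A couple of places where the write-up could be tightened: in part~(a) you briefly wander between two arguments (direct computation of $\chi_{\cO[P/Q]}(x)$ as a fixed-point count versus an inner-product/averaging argument), and only the first is needed --- once you know that over a $p$-group every indecomposable $p$-permutation module is of the form $k[P/Q]$ and that $\chi_{\cO[P/Q]}(x)=1$ if $Q=P$ and $0$ otherwise, you are done. In part~(b), the backward direction is cleaner than you make it: if $x$ lies in a vertex $Q$ then $P\leq Q$ and the trivial $kQ$-module is a source of $M$, so $k\mid\Res^G_Q(M)$, hence $k=\Res^Q_P(k)\mid\Res^G_P(M)$ and part~(a) gives $\chi_{\wh M}(x)\geq 1$. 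For the forward direction your idea is right: if $k\mid\Res^G_P(M)$ then, since $M$ is relatively $Q$-projective, Mackey forces $P\leq {}^gQ$ for some $g$, and ${}^gQ$ is again a vertex of $M$.
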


\noindent 

\noindent Furthermore, following the terminology of \cite[Definition 4.1.10]{HissLux}, we will call  an indecomposable $RG$-lattice $M$ with vertex $Q\leq G$ a \emph{cotrivial source $RG$-lattice} if the $RQ$-lattice $\Omega(R)$ is a source of $M$. Cotrivial source lattices have similar properties to trivial source lattices. In fact, an $RG$-lattice $M$ is a cotrivial source $RG$-lattice if and only if  $\Omega^{-1}(M)$ is a trivial source $RG$-lattice. \\

%

\subsection{Endo-permutation modules}
Endo-permutation modules over  a finite $p$-group $P$ were introduced by E.C. Dade in \cite{DadeI&II}. 
A $kP$-module $M$ is called \emph{endo-permutation} if its  $k$-endomorphism algebra $\End_k(M)$ is a permutation $kP$-module, where $\End_k(M)$ is endowed with its natural $kP$-module structure via the action of $G$ by conjugation, i.e.: 
$$\lconj{g}{\phi}(m)=g\cdot\phi(g^{-1}\cdot m)\quad \forall\,g\in P, \forall\, \phi\in \End_k(M)\text{ and }\forall\,m\in M \,.$$
Furthermore, an endo-permutation $kP$-module $M$ is said to be \emph{capped} if it has an indecomposable direct summand with vertex $P$. 

 In this case, $M\cong \Capp(M)^{\oplus m}\oplus X$, where $m\geq1$ is an integer, all the indecomposable direct summands of $X$ have vertices strictly contained in $P$, and $\Capp(M)$, called the \emph{cap} of $M$, is, up to isomorphism, the unique indecomposable direct summand of $M$ with vertex $P$.  We denote by $\dade(P)$ the  Dade group of $P$, which we see as  the set of isomorphism classes of indecomposable capped endo-permutation $kP$-modules with composition law induced by the tensor product over $k$, that is 
$$M+N:=\Capp(M\otimes_k N)\,.$$
Since $\End_k(M)\cong M^*\otimes_k M$, obviously the identity element is the trivial module and the inverse of $M$ is its $k$-dual $M^*$. Moreover, if $Q\leq P$, then we denote by $\Omega_{P/Q}$ the relative Heller operator with respect to $Q$. In other words, if $M$ is a $kP$-module, then $\Omega_{P/Q}(M)$ is the kernel of a relative $Q$-projective cover of  $M$.  (See \cite{Kno78, TheRelProj}.) 
With this notation the usual Heller operator is $\Omega=\Omega_{P/\{1\}}$ and $\Omega_{P/Q}(k)$ is  the kernel of the augmentation map $k[P/Q]\lra k$  (mapping every element of the basis $P/Q$ of the $kP$-permutation module $k[P/Q]$ to $1$). It follows easily that $M$ is an endo-permutation $kP$-module if and only if $\Omega_{P/Q}(M)$ is an endo-permutation $kP$-module. 
For further details on endo-permutation modules we refer the reader to the survey \cite[\S3-\S4]{TheSurvey} and the references therein.\\

\vspace{6mm}
\section{Cyclic Blocks: quoted results}\label{sec:cyclicblocks}

The theory of blocks with cyclic defect groups essentially goes back to \cite{Dad66} and was then developed by many different authors. We recall here the main results of this theory which we will be using throughout. 

\subsection{Notation}
From now on, unless otherwise stated,  we let ${\bf B}$ denote a block of $kG$ with cyclic defect group $D\cong C_{p^n}$ with $n\geq 1$.
For $0\leq i\leq n$, we denote by $D_i$ the unique cyclic subgroup of  order $p^i$ and we set $N_i:=N_G(D_i)$. 
Therefore we have the following chain of subgroups:
$$\{1\}=D_0 < D_1<\ldots< D_n=P_n\leq N_n\leq N_{n-1}\leq \ldots \leq N_1\leq G$$
and 
$$\smallskip  C_G(D)\leq C_G(D_{n-1})\leq \ldots\leq C_G(D_1)\leq C_G({D_0})=G\,.$$
We let $e$ denote the inertial index of $\bB$ and $m:=\frac{|D|-1}{e}$ be  the \emph{exceptional multiplicity} of~$\bB$. Then $e\mid p-1$, there are $e$ simple $\bB$-modules $S_1,\ldots,S_e$, and $e+m$ ordinary irreducible characters. We write
$$\Irr(\bB)=\Irr'(\bB)\sqcup\{\chi_{\lambda} \mid \lambda\in\Lambda\}\,,$$
where $|\Irr'(\bB)|=e$ and $\Lambda$ is an index set with  $|\Lambda|:=m$. If $m>1$, the characters $\{\chi_{\lambda} \mid \lambda\in\Lambda\}$ denote the exceptional characters of $\bB$, which  all restrict in the same way to the $p$-regular conjugacy classes of~$G$ and $\Irr'(\bB)$ consists of the non-exceptional characters of $\bB$, which are $p$-rational. 
We set $\chi_{\Lambda}:=\sum_{\lambda\in\Lambda}\chi_{\lambda}$ and $\Irr^{\circ}(\bB):=\Irr'(\bB)\sqcup\{\chi_{\Lambda}\}$. 
Furthermore, to a  block $\bB$ with cyclic defect groups are associated two important graphs:   the Brauer tree  $\sigma(\bB)$ of $\bB$ and the stable Auslander-Reiten quiver (or in short the stable AR-quiver) $\Gamma_s(\bB)$ of $\bB$.\\

\subsection{The Brauer tree}
The vertices of $\sigma(\bB)$ are labelled by the ordinary characters in $\Irr^{\circ}(\bB)$ and  the edges of $\sigma(\bB)$ are labelled by the simple $\bB$-modules $S_1,\ldots,S_e$. If $m>1$  the vertex corresponding to $\chi_{\Lambda}$ is called the \emph{exceptional vertex} and is indicated with a black circle in the drawings of $\sigma(\bB)$. We call \emph{leaf}  a vertex of $\sigma(\bB)$ with valency~$1$ or the edge adjacent to it. Hence the leaves of $\sigma(\bB)$ correspond to the simple liftable $\bB$-modules.  Furthermore, we assume that $\sigma(\bB)$ is given with a planar embedding, determined by specifying, for each vertex of $\sigma(\bB)$, a cyclic ordering of the edges adjacent to this vertex. We use the convention that in a drawing of $\sigma(\bB)$ in the plane, the successor of an edge is the counter-clockwise neighbour of this edge.  
For more detailed information relative to Brauer trees we refer the reader to \cite[\S 17]{AlperinBook} and \cite[Chapters 1 \& 2]{HissLux}. \\

\subsection{The stable Auslander-Reiten quiver} It is well-known that $\Gamma_s(\bB)$ is a finite tube of type $(\IZ/e\IZ)A_{p^{n}-1}$, where the Auslander-Reiten translate is given by $\Omega^{2}$. See e.g. \cite[Theorem 6.5.5]{BensonBookI}. \\
We say  that an indecomposable $\bB$-module  $X$ \emph{lies on the boundary}  of $\Gamma_s(\bB)$ if the middle term of the Auslander-Reiten sequence terminating at $X$ has exactly one non-projective indecomposable summand.  
If $D=C_2$, then $\Gamma_s(\bB)$ consists of a single vertex. If $D\neq C_2$ the boundary of a finite tube $(\IZ/e\IZ)A_{p^{n}-1}$ consists of the disjoint union of two $\Omega^2$-orbits of modules
$$\{\Omega^{2i}(X)\mid 0\leq i\leq e-1\}\text{ and }\{\Omega^{2i+1}(X)\mid 0\leq i\leq e-1\}\,,$$
where $X$ can always be chosen to be a non-exceptional leaf of $\sigma(\bB)$ (c.f. \cite[Proposition~3.3]{BessenrodtCyclic}). For a detailed treatment of the stable  AR-quiver of cyclic blocks we refer to  \cite[\S6.5]{BensonBookI} and  \cite{BessenrodtCyclic}.  Moreover, \cite{BleChi} determines  the distances to the boundaries of  $\Gamma_s(\bB)$ of any given indecomposable $\bB$-module.\\

\subsection{The Brauer correspondent of $\bB$ in $N_1$}\label{ssec:BrauerCorresp}  Throughout, we let  $\bb$ denote the Brauer correspondent of $\bB$ in~$N_1$. 
The structure of $\bb$ is given by Lemma~\ref{lem:complength} below. The inertial index of $\bb$ equals that of $\bB$, that is $e$, and the exceptional multiplicity of $\bb$ equals $m$. The stable AR-quiver of $\bb$ is again a tube of the form $(\IZ/e\IZ)A_{p^{n}-1}$. The Brauer tree $\sigma(\bb)$ of $\bb$ is a star with $e$ leaves, and  with exceptional vertex at its centre if $e,m>1$. If $e>1$ and $m=1$, then  we may assume that we have chosen the labelling of $\Irr(\bb)$ such that $\Irr'(\bb)$ labels the leaves of $\sigma(\bb)$. Therefore all $e$ simple $\bb$-modules
are liftable and if $e>1$ any of their lifts affords the $K$-character in $\Irr'(\bb)$ labelling the corresponding leaf of $\sigma(\bb)$. 
\par 
The Green correspondence with respect to $(G,N_1;D)$, which we denote by $f$ (upwards) and $g$ (downwards), commutes with the Brauer correspondence and with the Heller operator. Therefore, $f$~and~$g$ induce  $\Omega^2$-equivariant graph isomorphisms between $\Gamma_s(\bB)$ and $\Gamma_s(\bb)$ preserving vertices and sources (c.f. \cite[Theorem 6.6.5 and its proof]{BensonBookI}). More precisely, if $X$ is an indecomposable non-projective $\bB$-module with vertex contained in $D$, then $f(X)$ belongs to $\bb$, and $X$ and $f(X)$ lie at the same distance to the boundary of the stable AR-quiver. 
Likewise, if $Y$ is an indecomposable non-projective $\bb$-module with vertex in $D$, then  $g(Y)$ belongs to~$\bB$, and $Y$ and $g(Y)$ lie at the same distance to the boundary of the stable AR-quiver.\\

\vspace{2mm}
\subsection{PIMs and hooks}\label{ssec:PIMs}
Cyclic blocks being Brauer graph algebras (with respect to the Brauer tree), the structure of the PIMs of $\bB$,  can be described as follows (see e.g. \cite[\S 4.18]{BensonBookI}). If $S_j$ is a simple $\bB$-module, then its projective cover $P_{S_j}$ is of the form
$$P_{S_j}=\boxed{\begin{smallmatrix} S_j \\ Q_a\oplus\, Q_b \\ S_j\end{smallmatrix}}\,,$$
where $S_j=\soc(P_{S_j})=\head(P_{S_j})$ and the heart of $P_{S_j}$ is  $\rad(P_{S_j})/\soc(P_{S_j})=Q_a\oplus Q_b$ for two uniserial (possibly zero) $\bB$-modules $Q_a$ and $Q_b$. 
Furthermore, if the edge of $\sigma(\bB)$ corresponding to $S_j$ is 
$$
\begin{tikzcd}
\cdots\,\underset{\chi_a}{{\Circle}} \arrow[r, dash,"S_j"] & \underset{\chi_b}{{\Circle}} \,\cdots
\end{tikzcd}
$$
then the descending composition series of $Q_i$ ($i\in\{a,b\}$) 
is read off $\sigma(\bB)$ as follows: if the edges around $\chi_i$ are cyclically ordered $j,j_1,j_2,\ldots,j_r,j$, then $Q_i$ has descending composition factors
$$
\begin{cases}
 S_{j_1}, \ldots,S_{j_r}     & \text{if $\chi_i$ is not the exceptional vertex of }\sigma(\bB), \\
 S_{j_1}, \ldots,S_{j_r},S_j, S_{j_1}, \ldots,S_{j_r},S_j,\ldots,\ldots, S_{j_r}   & \text{if $\chi_i$ is the exceptional vertex of }\sigma(\bB),
\end{cases}
$$
where in the second case $S_{j_1}, \ldots,S_{j_r}$ appear $m$ times and $S_j$ appears $m-1$ times. Moreover the projective indecomposable character corresponding to $P_{S_j}$ is $\Phi_{S_j}=\chi_{a}+\chi_{b}$.
The PIMs of~$\bB$ are precisely the trivial source $\bB$-module with vertex $D_0=\{1\}$.\\ 

Next we recall that Green's walk around the Brauer tree \cite{GreenWalk} provides us with a description of the $\bB$-modules forming the boundary of $\Gamma_s(\bB)$ and their ordinary characters, that is the 
set of the Heller translates
$$\{\Omega^{i}(S)\mid 0\leq i\leq 2e-1\}$$
of a fixed simple $\bB$-module $S$ corresponding to a non-exceptional  leaf of $\sigma(\bB)$.

These modules  are called \emph{hooks} (and \emph{cohooks}) in \cite{BleChi} and in \cite{HN12}'s classification theorem of liftable $\bB$-modules (see Theorem~\ref{thm:classific_liftable}). More precisely, following the terminology used in \cite[\S 2.3]{BleChi}, the uniserial modules of the form 
$$H_a:=\boxed{\begin{smallmatrix}S_j\\Q_a\end{smallmatrix}}\qquad\text{ and }\qquad H_b:=\boxed{\begin{smallmatrix}S_j\\Q_b\end{smallmatrix}}$$
for a simple $\bB$-module $S_j$ are called the \textit{hooks} of $\bB$ and the uniserial modules
of the form 
$$C_a:=\boxed{\begin{smallmatrix}Q_a\\S_j\end{smallmatrix}}\qquad\text{ and }\qquad C_b:=\boxed{\begin{smallmatrix}Q_b\\S_j\end{smallmatrix}}$$
are called the \textit{cohooks} of $\bB$. By the above description of the PIMs, clearly  $\Omega(H_a)=C_b$, $\Omega(H_b)=C_a$, and  each hook is a cohook and conversely. Furthermore, if $e>1$ any lift of $H_a$ and $C_a$ affords the character $\chi_a$ and any lift of $H_b$ and $C_b$ affords the character $\chi_b$ (see also Theorem~\ref{thm:classific_liftable}(c)(2)).  Hence for simplicity we will say that $\chi_a$ (resp.~$\chi_b$) is afforded by the hooks $H_a$ and $C_a$ (resp.  $H_b$ and~$C_b$). 
Hooks are not trivial source modules in general, but they are essential to parametrise the position of the trivial source modules in~$\Gamma_s(\bB)$.

\vspace{6mm}
\section{Location of the trivial source modules in the stable AR-quiver}\label{sec:location}

The aim of this section is to determine the distance in $\Gamma_s(\bB)$ of the trivial source modules with vertex $D_i$ ($1\leq i\leq n$) to one of the boundaries. In order to avoid technicalities, unless otherwise stated throughout this section we will assume that $D\ncong C_2$, and we treat the case $D\cong C_2$ in $\S\ref{ssec:char2}$ below. Thus, $\Gamma_s(\bB)$ has two disjoint boundaries. 

It is clear that an $\Omega^2$-orbit of modules in  $\Gamma_s(\bB)$ has a common vertex $D_i\leq D$ and a common $kD_i$-source because indecomposable $kD_i$-modules are $\Omega$-periodic of period $2$.  It is also clear that there is precisely one $\Omega^2$-orbit of modules in $\Gamma_s(\bB)$ with vertex $D_i$ and trivial source (see e.g. \cite[p. 257, l. 1-3]{BessenrodtCyclic}). Therefore, we need to determine how many rows away from the boundary these orbits lie: this is what we call \emph{the distance to the boundary}. The second issue we need to deal with is the problem of determining from which of the two boundaries we start.

\vspace{2mm}
\subsection{The case in which $D_1$ is normal.}

\begin{lem}\label{lem:complength}
Let $H$ be an arbitrary finite group and let $\bA$ be a block of $kH$ with a non-trivial cyclic defect group $D\cong C_{p^n}$ ($n\geq 1$) and  inertial index $\tilde{e}$. If $D_1$ is normal in $H$, then the following holds. 
\begin{enumerate}
\item[\rm(a)]  The Brauer tree of $\bA$ is a star with $\tilde{e}$ edges and  exceptional vertex at its center. In particular $\bA$ is Morita equivalent to $k[D\rtimes C_{\tilde{e}}]$, where $C_{\tilde{e}}$ acts faithfully on~$D$. The PIMs of $A$ are uniserial, and therefore so is any indecomposable $\bA$-module, and in particular a quotient of a PIM. Moreover $\Gamma_s(\bA)$ is a tube of the form $(\IZ/\tilde{e}\IZ)A_{p^{n}-1}$.
\item[\rm(b)] The $\tilde{e}$ simple $\bA$-modules all have the same $k$-dimension and form one  of the two boundary $\Omega^2$-orbits of $\Gamma_s(\bA)$. We call this  $\Omega^2$-orbit the 1st row of $\Gamma_s(\bA)$.
\item[\rm(c)] The indecomposable $\bA$-modules forming the $\Omega^2$-orbit corresponding to the $i$-th row of $\Gamma_s(\bA)$ are all uniserial of  length $i$, and hence of the same $k$-dimension.
\end{enumerate}
\end{lem}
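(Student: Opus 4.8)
Here is the plan. I would deduce (a) from the classical structure theory of cyclic blocks, then read (b) and (c) off the resulting Nakayama structure, the one delicate point being the equality of the $k$-dimensions of the simple modules. Set $e := \tilde{e}$ and $C := C_H(D_1)$; since $D_1 \trianglelefteq H$ we have $N_H(D_1) = H$, so $C \trianglelefteq H$, the quotient $H/C$ embeds into $\Aut(D_1) \cong C_{p-1}$ and is in particular cyclic of order prime to~$p$, and $\bA$ coincides with its own Brauer correspondent in $N_H(D_1)$. For part~(a) the shape of $\sigma(\bA)$ is then classical: by the structure theory of blocks with cyclic defect groups \cite{Dad66} (cf.\ \S\ref{ssec:BrauerCorresp}, applied with $\bA$ in the role of $\bb$) the Brauer correspondent of a cyclic block in $N_H(D_1)$ has a star-shaped Brauer tree with $e$ edges and exceptional vertex, of multiplicity $m = (p^n-1)/e$, at its centre. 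The attached Brauer tree algebra is basic and is simultaneously the basic algebra of $\bA$ and of $k[D\rtimes C_e]$ with $C_e$ acting faithfully (the latter being itself a cyclic block with exactly this Brauer tree datum), whence the Morita equivalence. For the uniseriality I would invoke the description of the PIMs recalled in \S\ref{ssec:PIMs}: in a star every edge is incident with a leaf, so the uniserial summand of $\rad(P_{S_j})/\soc(P_{S_j})$ coming from that leaf is zero and $P_{S_j}$ is uniserial of composition length $p^n$; hence $\bA$ is a symmetric Nakayama algebra, so every indecomposable $\bA$-module is uniserial and therefore a quotient of a PIM. That $\Gamma_s(\bA)$ is a tube of type $(\IZ/e\IZ)A_{p^n-1}$ is recalled in Section~\ref{sec:cyclicblocks}.

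For parts~(b) and~(c) I would work entirely inside the Nakayama structure. Label the simples $S_1,\ldots,S_e$ so that the edges around the central vertex are cyclically ordered $1,\ldots,e$; then, reading $P_{S_j}$ from the top, its $t$-th composition factor is $S_{j+t}$ (indices mod $e$, using $p^n \equiv 1 \pmod e$ for the socle). The indecomposable non-projective $\bA$-modules are therefore exactly the uniserials $U_{j,i} := P_{S_j}/\rad^i(P_{S_j})$ with $1 \le j \le e$ and $1 \le i \le p^n-1$; $U_{j,i}$ is uniserial of composition length $i$ with top composition factors $S_j, S_{j+1},\ldots,S_{j+i-1}$, and these $e(p^n-1)$ pairwise non-isomorphic modules account for all vertices of $(\IZ/e\IZ)A_{p^n-1}$. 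A short syzygy computation gives $\Omega(U_{j,i}) = \rad^i(P_{S_j}) = U_{j+i,\,p^n-i}$ and hence
$$\Omega^2(U_{j,i}) = U_{j+1,\,i}\,.$$
Since the Auslander--Reiten translate on $\Gamma_s(\bA)$ is $\Omega^2$, for fixed $i$ the set $\{U_{1,i},\ldots,U_{e,i}\}$ is a single $\Omega^2$-orbit, and since over a Nakayama algebra neighbouring rows of the tube differ in composition length by one (the irreducible maps being, up to isomorphism, the canonical inclusions $U_{j,i}\hookrightarrow U_{j-1,i+1}$ and surjections $U_{j,i}\twoheadrightarrow U_{j,i-1}$), these orbits are stacked by composition length, with the length-$1$ orbit $\{S_1,\ldots,S_e\}$ and the length-$(p^n-1)$ orbit $\{U_{1,p^n-1},\ldots,U_{e,p^n-1}\}$ being the two boundary orbits. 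Declaring the simples to be the 1st row, the $i$-th row is then $\{U_{1,i},\ldots,U_{e,i}\}$, all uniserial of length $i$; this is (b) together with the uniseriality and length assertions in (c).

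The one remaining point, and the only step that leaves the Morita class of $\bA$, is that the simple $\bA$-modules all have the same $k$-dimension; granting it, $\dim_k U_{j,i} = i\cdot\dim_k S_1$ for all $j$, which finishes (b) and (c). Here the plan is: take a block $\bar\bA$ of $kC$ covered by $\bA$; since $D$ is abelian it lies in $C$, so $\bar\bA$ has defect group $D$, and its inertial quotient is a $p'$-group built from elements centralising $D_1 = \Omega_1(D)$, hence acting on $D\cong C_{p^n}$ trivially on $\Omega_1(D)$; as the automorphisms of $C_{p^n}$ that are trivial on $\Omega_1$ form a $p$-group, this inertial quotient is trivial, so $\bar\bA$ is a nilpotent block and has a unique simple module $\bar S$. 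Standard Clifford theory then closes the argument: the blocks of $kC$ covered by $\bA$ form one $H$-orbit of nilpotent blocks, so their simple modules are pairwise $H$-conjugate, hence equidimensional; and since $H/C$ is cyclic its Schur multiplier vanishes, so --- after a Fong--Reynolds reduction to the stabiliser of $\bar\bA$ when $\bar\bA$ is not $H$-stable --- $\bar S$ extends over that stabiliser and every simple $\bA$-module arises, up to induction, as $\bar S^{\mathrm{ext}}\otimes\lambda$ with $\lambda$ a one-dimensional simple of the cyclic $p'$-group $H/C$; in particular every simple $\bA$-module has dimension $\dim_k\bar S$. I expect this last step --- pinning down the dimensions through the $p$-local group theory rather than the (dimension-insensitive) Morita type --- to be the main obstacle; given the classical facts about cyclic blocks recalled in Section~\ref{sec:cyclicblocks}, parts~(a), (b) and~(c) are otherwise routine bookkeeping for Brauer-star, i.e.\ symmetric Nakayama, algebras.
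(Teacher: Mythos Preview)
Your argument is correct. The paper's proof is little more than a string of citations (to \cite{AlperinBook}, \cite{BensonBookI}, \cite{BessenrodtCyclic}) together with the standard observation for (c) that irreducible maps in an AR-quiver are injective or surjective, so that an arrow leaving a simple module is injective and one entering a simple is surjective, whence by induction the $i$-th row consists of modules of composition length~$i$. Your route is the same in spirit for (a), but for (b) and (c) you instead compute directly in the Nakayama model: you parametrise the indecomposables as $U_{j,i}$, verify $\Omega^2(U_{j,i})=U_{j+1,i}$ using $p^n\equiv 1\pmod e$, and read off the rows that way. The one place where you genuinely go beyond the paper is the equal-dimension claim in (b): the paper simply cites \cite[Theorem~3.6]{BessenrodtCyclic}, whereas you supply a self-contained Clifford-theoretic proof (the covered block of $C_H(D_1)$ is nilpotent because automorphisms of $C_{p^n}$ trivial on $\Omega_1$ form a $p$-group, and $H/C_H(D_1)$ is cyclic so extensions exist). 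Both approaches are standard; yours is more explicit and avoids the external reference at the cost of a short detour through block covering and Fong--Reynolds.
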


\begin{proof}{\ }
\begin{enumerate}
\item[\rm(a)]  See \cite[\S 17, Theorem 2]{AlperinBook} and \cite[Lemma 6.5.2 and Proposition 6.5.4]{BensonBookI}. 
\item[\rm(b)]  See \cite[Proposition 6.5.4]{BensonBookI} and \cite[Theorem 3.6(i) and (ii)]{BessenrodtCyclic}. 
\item[\rm(c)]  See \cite[p. 256]{BessenrodtCyclic}.  This is in fact a standard argument of Auslander-Reiten theory: since the irreducible maps determining the arrows of $\Gamma_s(\bA)$ are either injective or surjective, an arrow coming out of a simple module is injective and an arrow pointing into a simple module is surjective. It follows that the modules lying on the second row of $\Gamma_s(\bA)$ all have composition length 2, and by induction the modules lying on the $i$-th row of $\Gamma_s(\bA)$ all have composition length $i$.
\end{enumerate}
\end{proof}

\noindent In particular, under the hypothesis of Lemma~\ref{lem:complength},  the distance from an indecomposable $\bA$-module  $Y$ to  the boundary $\Omega^2$-orbit of $\Gamma_s(\bA)$ containing the simple $\bA$-modules is given by $\ell(Y)-1$.

\vspace{2mm}
\subsection{From $\bB$ to  $\bb$} 
In view of the above the two boundary $\Omega^2$-orbits of $\bB$ consist precisely of the hooks of $\bB$, and their Green correspondents are the hooks of $\bb$, i.e. the simple $b$-modules and their 1st Heller-translates.
\par
The next lemma tells us that the character values of the hooks at the non-trivial elements of~$D$ are determined by the character values of the simple $\bb$-modules. We note that this result is stated without proof in \cite[Lemma 2.5]{KK10}.

\begin{lem}\label{lem:CharValues}
Let $S$ be a simple $\bb$-module and let $g(S)$ be its Green correspondent. Let $\chi_S\in\Irr'(\bb)$ be the $K$-character labelling the leaf of $\sigma(\bb)$ corresponding to $S$ and let $\chi_{g(S)}\in\Irr^{\circ}(\bB)$ be the $K$-character afforded by the hook $g(S)$.  Then
$$\chi_{g(S)}(x)=\chi_{S}(x)$$
 for every  $x\in D\setminus\{1\}$. 
\end{lem}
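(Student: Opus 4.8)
The plan is to exploit the fact that Green correspondence with respect to $(G,N_1;D)$ commutes with the Brauer correspondence and with the Heller operator, together with the character-theoretic content of Lemma 2.4. First I would recall that $\bb$ has $D_1$ normal (indeed $N_1 = N_G(D_1)$), so Lemma 3.6 applies to $\bb$: the simple $\bb$-modules are all uniserial, and more importantly they are trivial source modules. In fact, since $\bb$ is Morita equivalent (even source-algebra equivalent, by Linckelmann) to a block of $k[D\rtimes C_e]$ twisted by the endo-permutation source $W$, and since $D_1 \trianglelefteq N_1$ acts trivially on $W$, each simple $\bb$-module $S$ has a vertex containing $D_1$; being on the boundary $\Omega^2$-orbit of $\Gamma_s(\bb)$ and liftable, $S$ has vertex exactly $D$ if $W$ is the trivial module, but in general one must be a little careful. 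What is true unconditionally and suffices here is that $g(S)$ is a hook of $\bB$ with vertex $D$, hence in particular $\langle x\rangle$ is contained in a vertex of $g(S)$ for every $x \in D\setminus\{1\}$.

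The key step is the following restriction argument. Fix $x \in D \setminus \{1\}$ and set $Q := \langle x\rangle \leq D \leq N_1$. By the Mackey/Green-correspondence formula, for an indecomposable non-projective $\bB$-module $X$ with vertex in $D$ one has
$$\Res^G_{N_1}(X) \cong g(X) \oplus (\text{proj.\ and }N_1\text{-modules with vertex not conjugate into }D),$$
and in particular, restricting further down to $Q \leq D \leq N_1$, the non-trivial-on-$Q$ part is governed entirely by $g(X)$ up to summands that are free as $kQ$-modules (these contribute nothing to the count in Lemma 2.4(a) at the $p$-element $x$, since a free $kQ$-module has no trivial $kQ$-summand when $Q \neq 1$). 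Concretely, I would argue that the number of trivial $kQ$-summands of $\Res^G_Q(g(S))$ equals the number of trivial $kQ$-summands of $\Res^{N_1}_Q(S)$, because $g(S)$ is (isomorphic to) the unique indecomposable summand of $\Res^{N_1}_{?}$... — more precisely, one uses that $g(S) \mid \Res^G_{N_1}(\text{(something built from }S)$; cleanest is to invoke that Green correspondence preserves $kD$-sources (stated in §3.4), so $g(S)$ and $S$ have a common $kD$-source $V$, and hence a common $kQ$-source is obtained by restricting $V$ to $Q$, which determines the trivial-$kQ$-summand multiplicity on both sides identically.

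Putting this together: by Lemma 2.4(a), $\chi_{g(S)}(x)$ equals the number of trivial $k\langle x\rangle$-summands of $\Res^G_{\langle x\rangle}(g(S))$, and $\chi_S(x)$ equals the number of trivial $k\langle x\rangle$-summands of $\Res^{N_1}_{\langle x\rangle}(S)$; since $g(S)$ and $S$ share a $kD$-source and differ, after restriction to $N_1$, only by summands that are free (hence trivial-summand-free) over $\langle x\rangle$, these two counts coincide. Thus $\chi_{g(S)}(x) = \chi_S(x)$ for all $x \in D \setminus\{1\}$, as claimed. The main obstacle I anticipate is making the "differ only by $kQ$-free summands" claim airtight: one must rule out that an $N_1$-summand of $\Res^G_{N_1}(g(S))$ other than $S$ itself (with vertex strictly smaller than $D$ but still meeting $Q$ nontrivially) contributes a spurious trivial $kQ$-summand. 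This is handled by the precise form of Green correspondence on modules with vertex in $D$ — the non-$g(S)$ summands have vertices that are $N_1$-conjugate into $D \cap {}^gD$ for $g \notin N_1$, and for $D$ cyclic with $D_1 \trianglelefteq N_1$ one checks $D \cap {}^gD = 1$ for such $g$ (else $D_1 \leq {}^gD$ forces $g \in N_G(D_1) = N_1$), so those summands are genuinely projective as $kD$-modules and a fortiori free over $kQ$.
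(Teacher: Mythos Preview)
Your argument has a genuine gap. You invoke Lemma~\ref{lem:tscharacters} to equate $\chi_{g(S)}(x)$ and $\chi_S(x)$ with the number of trivial $k\langle x\rangle$-summands of the respective restrictions, but that lemma applies only to \emph{trivial source} modules. Your own hesitation (``in general one must be a little careful'') is warranted: the simple $\bb$-modules have $W$ as a $kD$-source (see \S\ref{ssec:sourceAlgebra}), Green correspondence preserves sources, and hence $g(S)$ also has source $W$. Whenever $W\neq k$, neither $S$ nor the hook $g(S)$ is a trivial source module---indeed Corollary~\ref{cor:D1Dn}(c) shows that a hook is trivial source only when $W=k$. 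In that situation the identity ``character value $=$ number of trivial summands'' simply fails, so both applications of Lemma~\ref{lem:tscharacters} are unjustified and the equality of the two counts, while correct, does not yield the equality of character values.

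Your module-theoretic core is sound and salvageable: using that $D_1$ is a trivial intersection subgroup (so $D\cap{}^gD=\{1\}$ for $g\notin N_1$) you get $\Res^G_{N_1}(g(S))\cong S\oplus P$ with $P$ projective, and hence the restrictions to any $\{1\}\neq Q\leq D$ differ by a free $kQ$-module. The fix is to bypass Lemma~\ref{lem:tscharacters} entirely and work with characters directly: lifting this decomposition to $\cO$ gives $\chi_{g(S)}|_{N_1}=\chi_S+\Phi$ with $\Phi$ a projective character (using that any lift of the hook $S$ of $\bb$ affords $\chi_S$), and projective characters vanish on $D\setminus\{1\}$. The paper runs the dual argument via induction rather than restriction: from $\Ind_{N_1}^G(S)\cong g(S)\oplus(\text{proj})$ one gets $\Ind_{N_1}^G(\chi_S)(x)=\chi_{g(S)}(x)$ on $D\setminus\{1\}$, and then the induced-character formula together with Green's Theorem on Zeros of Characters shows $\Ind_{N_1}^G(\chi_S)(x)=\chi_S(x)$ there. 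The restriction route you sketch is arguably shorter once repaired, since it avoids the induced-character computation; but as written, the reliance on Lemma~\ref{lem:tscharacters} is the obstruction.
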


\begin{proof} Since $D_1$ is a trivial intersection subgroup of $G$, by \cite[\S17, Theorem~3]{AlperinBook}, we have that
$$\Ind_{N_1}^{G}(S)\cong g(S)\oplus (\text{projective summands}).$$
Therefore 
$$\Ind_{N_1}^{G}(\chi_{S})=\chi_{g(S)}+(\text{projective characters})$$
and,  as projective characters vanish at non-trivial $p$-elements, we obtain that 
$$\big(\Ind_{N_1}^{G}(\chi_{S})\big)(x)=\chi_{g(S)}(x)\quad\quad\forall x\in D\setminus\{1\}.$$
Moreover, 
$$\big(\Ind_{N_1}^{G}(\chi_{S})\big)(x)=\sum_{i=1}^{r}\overset{\cdot}{\chi_{S}}(x_ixx_i^{-1})\,$$
where $x_1:=1_G,\ldots,x_r$ ($r\in\IN$) is a set of representatives of the left cosets of $N_1$ in $G$ and $\overset{\cdot}{\chi_{S}}(g)=\chi_{S}(g)$ if $g\in N_1$ whereas $\overset{\cdot}{\chi_{S}}(g)=0$ if $g\in G\setminus N_1$.
In addition, by the proof of \cite[\S17, Theorem~3]{AlperinBook} the non-trivial subgroups of the groups of the form $N_1\cap x_iDx_i^{-1}$ are not $N_1$-conjugate to a subgroup of $D$ provided $x_i\in G\setminus N_1$. 
 Therefore it follows from Green's Theorem on Zeros of Characters (see \cite[(19.27)]{CR1}) that
$\overset{\cdot}{\chi_{S}}(x_ixx_i^{-1})=0$ for each $i\neq 1$ because the simple $\bb$-modules are $D$-projective, and hence so are their lifts. Thus
$$\chi_{g(S)}(x)=\big(\Ind_{N_1}^{G}(\chi_{S})\big)(x)=\chi_{S}(x)\quad\quad\forall x\in D\setminus\{1\}.$$
\end{proof}

\begin{cor}\label{cor:positivity/negativity}
Let  $\chi\in \Irr^{\circ}(\bB)$, let $u$ be a generator of $D_1$, let $d$ be the dimension of the simple $\bb$-modules. 
\begin{enumerate}
\item[\rm(a)] If $\chi$ is the character afforded by a hook of $\bB$ whose Green correspondent is a simple $\bb$-module, then  $\chi(u)=d$.
\item[\rm(b)] If $\chi$ is the character afforded by a hook of $\bB$ whose Green correspondent is of the form $\Omega(S)$ for a simple $\bb$-module $S$, then  $\chi(u)=-d$. 
\end{enumerate}
\end{cor}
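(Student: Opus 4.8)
The plan is to reduce Corollary~\ref{cor:positivity/negativity} to Lemma~\ref{lem:CharValues} together with the explicit knowledge of the character values of the simple $\bb$-modules, which is available because $D_1$ is normal in $N_1$ and hence Lemma~\ref{lem:complength} applies to the block $\bb$. First I would observe that part~(a) is almost immediate: if the hook $g(S)$ of $\bB$ has Green correspondent a simple $\bb$-module $S$, then by Lemma~\ref{lem:CharValues} we have $\chi(u) = \chi_S(u)$ for the generator $u$ of $D_1$. So it remains to compute $\chi_S(u)$ for a simple $\bb$-module $S$, and to show it equals $d = \dim_k S$.

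For this computation I would use that $\bb$ is Morita equivalent (in fact source-algebra equivalent) to a block with normal defect group, more precisely that $N_1/C_{N_1}(D_1)$ embeds into $\Aut(D_1) \cong C_{p-1}$, so $D_1$ acts trivially on every simple $\bb$-module (this is exactly the observation made in the introduction that $D_1$ acts trivially on $W$, and more elementarily follows since the simple modules of a block are inflated from $N_1/O_p(N_1)$-modules, or one can argue via Clifford theory: a simple $k[N_1]$-module restricted to the normal $p$-subgroup $D_1$ is a direct sum of copies of the trivial module). Hence $u$ acts as the identity on any simple $\bb$-module $S$, so the Brauer character of $S$ at $u$ is $d$, and since $u$ is a $p$-element the ordinary character of any lift $\widetilde{S}$ agrees with the Brauer character on $u$, giving $\chi_S(u) = d$. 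Combining with Lemma~\ref{lem:CharValues} yields~(a).

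For part~(b), the hook $g(S)$ has Green correspondent $\Omega(S)$ for a simple $\bb$-module $S$, and Green correspondence commutes with $\Omega$ (as recalled in \S\ref{ssec:BrauerCorresp}), so $g$ of the $\bb$-hook $\Omega(S)$ is the $\bB$-hook $g(S)$. Again Lemma~\ref{lem:CharValues} gives $\chi(u) = \chi_{\Omega(S)}(u)$, where $\chi_{\Omega(S)} \in \Irr(\bb)$ is the character of a lift of the $\bb$-module $\Omega(S)$. Now I would use the short exact sequence $0 \to \Omega(S) \to P_S \to S \to 0$ of $\bb$-modules: it lifts to an exact sequence of $\cO N_1$-lattices (the projective cover $P_S$ lifts uniquely, and so does its uniserial quotient $S$), whence on characters $\chi_{\Omega(S)} = \Phi_S - \chi_S$, where $\Phi_S$ is the projective indecomposable character. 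Since $\Phi_S$ vanishes on the $p$-singular element $u$, we get $\chi_{\Omega(S)}(u) = -\chi_S(u) = -d$ by part~(a), and then Lemma~\ref{lem:CharValues} gives $\chi(u) = -d$.

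The only genuinely delicate point is the assertion that $D_1$ acts trivially on the simple $\bb$-modules; everything else is bookkeeping with Green correspondence and the standard exact sequence for $\Omega$. I expect the cleanest way to phrase it is to invoke that $\bb$ has normal defect group $D$ (indeed $D \trianglelefteq N_1$ since $D \leq C_G(D_1) \leq N_1$ and $D_1 \trianglelefteq N_1$ forces... more carefully, one uses $D_1 \trianglelefteq N_1$), so by Lemma~\ref{lem:complength}(a) applied with $H = N_1$ the block $\bb$ is Morita equivalent to $k[D \rtimes C_e]$ with $C_e$ acting faithfully on $D$; the simple modules of the latter are the $e$ inflations of the simple $kC_e$-modules, on which $D$ — and a fortiori $D_1$ — acts trivially, and they all have dimension $d = \dim_k S$ which is common to all of them by Lemma~\ref{lem:complength}(b). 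Since a Morita equivalence need not preserve character values on $p$-singular classes, I would instead run the argument directly inside $\bb$ via Clifford theory with the normal $p$-subgroup $D_1$, which is clean and avoids that subtlety.
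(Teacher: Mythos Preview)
Your argument for (a) is identical to the paper's: invoke Lemma~\ref{lem:CharValues} and then observe via Clifford theory that $D_1$ acts trivially on every simple $\bb$-module, so $\chi_S(u)=\dim_k S=d$.

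For (b) the paper takes a slightly different route. Rather than descending to $\bb$ and using the exact sequence $0\to\Omega(S)\to P_S\to S\to 0$ there, the paper stays in $\bB$: by Green's walk around $\sigma(\bB)$ the hook $H=g(\Omega(S))$ is adjacent (via some edge of $\sigma(\bB)$) to a vertex $\chi'$ afforded by the hook $\Omega^{-1}(H)$, whose Green correspondent is $S$; then $\chi'(u)=d$ by part~(a), and since $\chi+\chi'$ is a projective indecomposable character of $\bB$ it vanishes at~$u$, giving $\chi(u)=-d$. Both proofs rest on the same mechanism (a PIM character vanishes at $p$-singular elements), but the paper applies it in $\bB$ while you apply it in $\bb$. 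The paper's version has the mild advantage that it only uses Lemma~\ref{lem:CharValues} as stated, for \emph{simple} $S$; your version invokes it for $\Omega(S)$, which is outside the lemma's stated scope. The proof of Lemma~\ref{lem:CharValues} goes through verbatim for $\Omega(S)$ (the TI decomposition of $\Ind_{N_1}^G$ and Green's vanishing theorem only need a $D$-projective indecomposable $\bb$-module), so this is not a gap, but you should say so explicitly. Also, a notational slip: in your first sentence of~(b) you call the $\bB$-hook ``$g(S)$'', but $g(S)$ is by definition the Green correspondent of $S$, not of $\Omega(S)$; the hook in question is $g(\Omega(S))=\Omega(g(S))$.
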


\begin{proof}
By \S\ref{ssec:PIMs} any $\chi\in \Irr^{\circ}(\bB)$ is afforded by a hook $H$ of $\bB$. Therefore, there is a simple $\bb$-module $S$ such that either $H=g(S)$ or $H=g(\Omega(S))$.\\
If $H=g(S)$ and $\chi_S\in\Irr'(\bb)$ is the character labelling the leaf of $\sigma(\bb)$ defined by $S$, then by Lemma~\ref{lem:CharValues} we have $\chi(u)=\chi_{S}(u)$. Moroever, by Clifford theory $D_1$ acts trivially on the simple $\bb$-modules, hence $\chi_{S}(u)=d$ and  (a) follows. \\
If $H=g(\Omega(S))$, then by Green's walk around $\sigma(\bB)$ there is an edge of $\sigma(\bB)$ adjacent to $\chi$  whose other extremity, say labelled by $\chi'\in\Irr^{\circ}(\bB)$, is such that $\chi'$ is afforded by the hook $\Omega^{-1}(H)$. Then $f(\Omega^{-1}(H))=S$ and $\chi'(u)=d$ by (a). Now, $\chi+\chi'$ being a projective indecomposable character, it vanishes at non-trivial $p$-elements and it follows that $\chi(u)=-d$.
\end{proof}

As above, let $u$ be a generator of $D_1$. In consequence, we will call a vertex of $\sigma(\bB)$ corresponding to a character $\chi\in \Irr^{\circ}(\bB)$ such that $\chi(u)> 0$  \emph{positiv} and write $\chi>0$.  We will call a vertex of $\sigma(\bB)$ corresponding to a character $\chi\in \Irr(\bB)$ such that $\chi(u)< 0$  \emph{negativ} and we write $\chi<0$. Because the projective indecomposable characters of $\bB$ are of the form $\Phi_{S_j}=\chi_a+\chi_b$ ($1\leq j\leq e$), where $\chi_a$ and $\chi_b$ label the end vertices of the edge corresponding to the simple $\bB$-module $S_j$ (see \S\ref{ssec:PIMs}), a positive vertex of~$\sigma(\bB)$ can only be linked to a negative vertex and conversely.
\par
It follows that one of the boundary $\Omega^2$-orbit of $\Gamma_s(\bB)$ consists of the hooks with positive ordinary characters, and the other boundary $\Omega^2$-orbit of $\Gamma_s(\bB)$ consists of the hooks with negative characters.  Therefore, given a non-projective indecomposable $\bB$-module $X$,  we define its \emph{(positive) distance} $d^{+}(X)$ (resp. its \emph{negative distance}  $d^{-}(X)$) to be the length of a shortest path in $\Gamma_s(\bB)$ to the boundary consisting of the positive (resp. negative) hooks.  Clearly 
$$d^{+}(X)+d^{-}(X)=(p^{n}-1)-1=em-1\,.$$

\begin{cor}\label{cor:d+GreenCorresp}
Let  $X$ be  a non-projective indecomposable $\bB$-module. Then 
$$d^+(X)=d^{+}(f(X))=\ell(f(X))-1\,.$$
\end{cor}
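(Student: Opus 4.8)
The plan is to push the whole question across the Green correspondence to the Brauer correspondent $\bb$ in $N_1$, where $D_1$ is normal, so that Lemma~\ref{lem:complength} applies and makes every non-projective indecomposable module uniserial. Concretely I would split the asserted identity into $d^+(X)=d^+(f(X))$ and $d^+(f(X))=\ell(f(X))-1$, and establish the second one via Lemma~\ref{lem:complength}(c) and the remark following it (applied to $\bA=\bb$): every non-projective indecomposable $\bb$-module is uniserial and sits on the $i$-th row of $\Gamma_s(\bb)$ exactly when it has composition length $i$, while the simple $\bb$-modules form the $1$st row; hence the distance of $f(X)$ to the $\Omega^2$-orbit of the simple $\bb$-modules is $\ell(f(X))-1$. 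So everything reduces to showing that this $\Omega^2$-orbit is the ``positive'' boundary orbit of $\Gamma_s(\bb)$ and that $f$ carries the positive boundary orbit of $\Gamma_s(\bB)$ onto it.

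For the first identity I would invoke the properties of the Green correspondence $f$ with respect to $(G,N_1;D)$ recalled in \S\ref{ssec:BrauerCorresp} (this correspondence is available because $D_1$ is the unique subgroup of order $p$ in the cyclic group $D$, so $N_G(D)\leq N_1$): $f$ induces an $\Omega^2$-equivariant graph isomorphism between $\Gamma_s(\bB)$ and $\Gamma_s(\bb)$ that preserves vertices and sources. Being a graph isomorphism it sends each of the two boundary $\Omega^2$-orbits of $\Gamma_s(\bB)$ onto a boundary $\Omega^2$-orbit of $\Gamma_s(\bb)$ and preserves the distance of an arbitrary vertex to each individual boundary orbit; so the only real work is to check that $f$ matches the orbits compatibly with the sign, i.e. that $f(\{\text{positive hooks of }\bB\})=\{\text{simple }\bb\text{-modules}\}$. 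Here I would use Corollary~\ref{cor:positivity/negativity}: by part~(a) a hook $H$ of $\bB$ has $\chi_H(u)=d>0$ (hence is positive) exactly when $f(H)$ is a simple $\bb$-module, and by part~(b) $\chi_H(u)=-d<0$ exactly when $f(H)=\Omega(S)$ for a simple $S$. Combined with the fact (recorded just above the statement) that the two boundary $\Omega^2$-orbits of $\Gamma_s(\bB)$ are precisely the positive hooks and the negative hooks, and with the observation that the simple $\bb$-modules, being fixed by $D_1$ by Clifford theory, form the positive boundary orbit of $\Gamma_s(\bb)$, this pins down the matching and yields $d^+(X)=d^+(f(X))=\ell(f(X))-1$.

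I do not expect a substantive obstacle: the three ingredients are already in place as Lemma~\ref{lem:complength}, the description of the Green correspondence in \S\ref{ssec:BrauerCorresp}, and Corollary~\ref{cor:positivity/negativity}. The one point demanding a little care is the bookkeeping that the graph isomorphism $f$ respects the ``$\pm$''-labelling of the two boundary orbits rather than swapping them; this is exactly what the character-value computation in Corollary~\ref{cor:positivity/negativity}(a) settles, by identifying the positive hooks of $\bB$ as the ones whose Green correspondents are the simple $\bb$-modules, which form the $1$st (and hence positive) row of $\Gamma_s(\bb)$.
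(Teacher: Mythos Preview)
Your proposal is correct and follows the same approach as the paper's proof, which simply cites Corollary~\ref{cor:positivity/negativity} and the definition for the equality $d^+(X)=d^+(f(X))$, and Lemma~\ref{lem:complength}(c) for $d^+(f(X))=\ell(f(X))-1$. Your write-up is essentially an unpacking of what the paper leaves implicit: that the graph isomorphism induced by~$f$ preserves distances to the boundary orbits, and that Corollary~\ref{cor:positivity/negativity} is precisely what pins down which boundary orbit of $\Gamma_s(\bb)$ is the positive one, namely the orbit of the simple $\bb$-modules.
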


\begin{proof}
By definition and Corollary~\ref{cor:positivity/negativity} we have  $d^+(X)=d^{+}(f(X))$. By Lemma~\ref{lem:complength}(c) we have ${d^{+}(f(X))=\ell(f(X))-1}$. 
\end{proof}

\vspace{2mm}
\subsection{Reduction to the centraliser $C_{G}(D_1)$}

Let $\bc$ be a block of $C_{G}(D_1)$ covered by $\bb$ and let $T:=T(\bc)$ be the inertia group of $\bc$ in $N_1$. 
Let $\bb'$ be the unique block of $T$ covering $\bc$ such that $\bb'^{N_1}=\bb$ (i.e. the Fong-Reynolds correspondent of~$\bb$). 
Then $D$ is also a defect group of $\bb'$ and~$\bc$, $|T:C_G(D_1)|=e$, the inertial index of $\bb'$ is~$e$, whereas $\bc$ is nilpotent. See \cite[\S 17-19]{AlperinBook}.
By  Lemma~\ref{lem:complength} the blocks $\bb'$ and $\bc$ are also uniserial,  $\Gamma_s(\bb')\cong(\IZ/e\IZ)A_{p^{n}-1}$ and $\Gamma_s(\bc)\cong A_{p^{n}-1}$.\\

We reduce the problem of determining the distances  in $\Gamma_s(\bB)$ of the trivial source modules  to one of the boundaries to the problem of determining  the composition lengths of the trivial source  $\bc$-modules through the following steps:

$$
\begin{tikzcd}
G \arrow[d, dash]  & & \bB-\mbox{mod}  \arrow[d, xshift=0.7ex, "{\text{\tiny Green correspondence (stable equivalence of Morita type)}}"] \\  
N_1 \arrow[d, dash] & & \bb-\text{mod}  \arrow[u, xshift=-0.7ex]   \arrow[d, xshift=0.7ex, "{\text{\tiny source algebra equivalence (induced by $\Ind_{T}^{N_1}$)}}"] \\ 
T \arrow[d, dash, "e"] & &  \bb'-\text{mod}  \arrow[u, xshift=-0.7ex] \\
C_{G}(D_1) & &  \bc-\text{mod}  \arrow[u, dashrightarrow, "\text{\tiny Clifford theory (induced by $\Ind_{C_G(D_1)}^{T}$)}"'] 
\end{tikzcd}
$$
More accurately, if $V$ denote the unique simple $\bc$-module, then the following is well-known:
\begin{itemize}
\item[\rm 1.]  $\Ind_{C_G(D_1)}^{T}(V)$ has $e$ non-isomorphic direct summands $V_1,\ldots, V_e$, which constitute a complete set of representatives of the isomorphisms classes of the simple $\bb'$-modules. (See \cite[\S19, Lemma~4]{AlperinBook}.) It follows that for every $1\leq i\leq |D|-1$, induction from $C_G(D_1)$ to $T$ of a  uniserial indecomposable module of length $i$  is a direct sum of $e$ non-isomorphic uniserial indecomposable modules of length $i$.  Hence lengths, vertices and sources are preserved by $\Ind_{C_G(D_1)}^{T}$ between $\bc$ and $\bb'$. 
\item[2.] By the Fong-Reynolds theorem the induced modules $\Ind_{T}^{N_1}(V_i)$ ($1\leq i\leq e$)  then form a complete set of representatives of the isomorphisms classes of  the simple $\bb$-modules.  In fact, by \cite[(9.5)]{LinckThese} induction from $T$ to $N_1$ induces a source-algebra equivalence between $\bb'$ and $\bb$. Therefore, lengths, vertices and sources are also preserved by $\Ind_{T}^{N_1}$ between $\bb'$ and $\bb$. 
\end{itemize} 

\begin{cor}\label{cor:d+c}
Let $M$ be the unique  trivial source $\bc$-module with vertex $D_i$ ($1\leq i\leq n$). Then the indecomposable direct summands of $\Ind_{C_G(D_1)}^{N_1}(M)$ are precisely the  trivial source $\bb$-modules  with vertex $D_i$, and for any such direct summand $Y$ we have $d^+(Y)=\ell(M)-1$.  
\end{cor}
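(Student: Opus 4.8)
The plan is to chain together the two length/vertex/source preserving functors described immediately above the statement, and then apply Corollary \ref{cor:d+GreenCorresp} together with Lemma \ref{lem:complength}(c). First I would record that, since $\bc$ is nilpotent with cyclic defect group $D$, Lemma \ref{lem:complength} applies to $\bc$: its PIMs are uniserial, so every indecomposable $\bc$-module is uniserial and is determined up to isomorphism by its composition length, and $\Gamma_s(\bc)\cong A_{p^n-1}$ has its $i$-th row consisting of the uniserial modules of length $i$. In particular there is a unique indecomposable $\bc$-module of each length $1\le i\le p^n-1$, and the one of length $i$ has vertex $D_i$ with trivial source (it is the quotient of the uniserial PIM of the appropriate shape, equivalently it is visibly a summand of a permutation module since $\bc$ is nilpotent and Morita equivalent to $kD$ via a source-algebra equivalence; alternatively, cite that the unique trivial source $\bc$-module with vertex $D_i$ must be uniserial and hence is the one of length $\ell$ with $D_\ell = D_i$, forcing $\ell=p^i$ — wait, more carefully: the vertex of the uniserial module of length $i$ is the smallest $D_j$ such that $p^j\ge i$, so the trivial source module with vertex $D_i$ has length exactly... this needs care). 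Let me restate: let $M$ be the unique trivial source $\bc$-module with vertex $D_i$; it is uniserial of some length $\ell(M)$, and $d^+$-type distance arguments below only use $\ell(M)$, so no explicit formula for $\ell(M)$ is needed here.

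Next I would push $M$ through $\Ind_{C_G(D_1)}^{T}$. By item 1 above (Clifford theory, \cite[\S19, Lemma~4]{AlperinBook}), $\Ind_{C_G(D_1)}^{T}(M)$ is a direct sum of $e$ pairwise non-isomorphic uniserial $\bb'$-modules, each of length $\ell(M)$, hence each with vertex $D_i$ and trivial source (vertices and sources are preserved because $\Ind$ of a trivial source module is a $p$-permutation module, and restriction back recovers $M$ as a summand up to the Green correspondence which fixes the vertex $D_i$). Then I would push these through $\Ind_{T}^{N_1}$; by item 2 (Fong--Reynolds, and the source-algebra equivalence of \cite[(9.5)]{LinckThese}), this again preserves length, vertex and source, so the indecomposable summands of $\Ind_{C_G(D_1)}^{N_1}(M)=\Ind_{T}^{N_1}\Ind_{C_G(D_1)}^{T}(M)$ are uniserial $\bb$-modules of length $\ell(M)$ with vertex $D_i$ and trivial source. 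Since there is precisely one $\Omega^2$-orbit of trivial source $\bb$-modules with vertex $D_i$ (as noted at the start of \S\ref{sec:location}), and by Lemma \ref{lem:complength}(c) the uniserial $\bb$-modules of a fixed length form exactly one $\Omega^2$-orbit, these summands are \emph{all} the trivial source $\bb$-modules with vertex $D_i$.

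Finally, for such a summand $Y$, the distance $d^+(Y)$ is computed directly: $\bb$ has $D_1$ normal (it is a block of $N_1=N_G(D_1)$), so Lemma \ref{lem:complength}(c) and the remark following it give $d^+(Y)=\ell(Y)-1=\ell(M)-1$, provided the boundary $\Omega^2$-orbit of $\Gamma_s(\bb)$ containing the simple $\bb$-modules is the ``positive'' one. That last point is exactly what Corollary \ref{cor:positivity/negativity}(a) supplies (the simple $\bb$-modules are hooks whose Green correspondents are again the simple $\bb$-modules — tautologically — and the ``positive'' boundary of $\Gamma_s(\bB)$, hence via the Green-correspondence graph isomorphism the corresponding boundary of $\Gamma_s(\bb)$, is the one consisting of modules with character value $+d$ on $u$). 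Concretely: the simple $\bb$-modules lie on the first row of $\Gamma_s(\bb)$, and by Corollary \ref{cor:positivity/negativity}(a) (or by the definition of $d^+$ transported through $f$) this is the positive boundary; hence $d^+(Y)=\ell(Y)-1=\ell(M)-1$.

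I expect the main obstacle to be bookkeeping rather than a deep difficulty: one must be careful that (i) ``preserves vertices and sources'' really does follow from the cited source-algebra equivalence and from the Clifford-theoretic induction (the summands of an induced trivial source module are trivial source, and induction does not shrink the vertex here because the vertex $D_i\le D_1$ is already inside the subgroup $C_G(D_1)$ over which we induce, so no vertices are lost), and that (ii) the identification of the \emph{positive} boundary is consistent across the chain $\bB \leftrightarrow \bb \leftrightarrow \bb' \leftrightarrow \bc$ — this is where Corollary \ref{cor:positivity/negativity} and the $\Omega^2$-equivariant graph isomorphisms from \S\ref{ssec:BrauerCorresp} do the real work. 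Everything else is an assembly of the four vertical arrows in the diagram preceding the statement.
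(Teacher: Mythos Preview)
Your proposal is correct and follows the same route as the paper's (very brief) proof: invoke items~1 and~2 preceding the statement to see that $\Ind_{C_G(D_1)}^{N_1}$ preserves length, vertex and source, and then use Lemma~\ref{lem:complength}(c) together with the identification of the positive boundary to conclude $d^+(Y)=\ell(Y)-1=\ell(M)-1$. One trivial slip in your final bookkeeping paragraph: you wrote ``$D_i\le D_1$'' where you meant $D_1\le D_i$ (for $1\le i\le n$).
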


\begin{proof}The first claim follows directly from 1. and 2. above. Moreover, as $D_1$ is normal in $N_1$, $T$ and $C_G(D_1)$, it follows from Lemma~\ref{lem:complength}(c) and the above that 
$${d^{+}(Y)=\ell(Y)-1=\ell(M)-1}\,.$$
\end{proof}

\vspace{2mm}

\subsection{Reduction to a source algebra of  $\bc$}\label{ssec:sourceAlgebra}
We now describe how to use a source algebra of the block $\bc$ in order to recover the trivial source modules of this block. 
Puig's characterisation of nilpotent blocks,  
see \cite[Corollary 8.11.11]{LinckBook}, 
states that any source algebra $A$ of $\bc$ is isomorphic to 
$$S\otimes_k kD$$ 
as interior $D$-algebra, where $S:=\End_k(W)$ for an indecomposable capped endo-permutation $kD$-module $W$. Moreover, $W$ is a source of the unique simple $\bc$-module $V$, and also a source of the simple $\bb$-modules.   Recall  that as  $D_1\trianglelefteq C_G(D_1)$ it follows from Clifford theory  that $D_1$ acts trivially on $V$, hence also on $W$.

\noindent Then, we have two Morita equivalences:
$$
\begin{tikzcd}
kD-\text{mod}  \arrow[r, leftrightarrow,  "\sim_M"]   &  A -\text{mod}   \arrow[r, leftrightarrow, "\sim_M"]   &     \bc-\text{mod} 
\end{tikzcd}
$$
The first one is obtained by tensoring over $k$ with $W$ viewed as an $S$-module. In other words, an arbitrary indecomposable $A$-module is of the form $W\otimes_k U$, where $U$ is an indecomposable $kD$-module. For the second one let $i\in \bc^D$ be a source idempotent of $\bc$ such that $A=ikGi$. Then the $(\bc,A)$-bimodule $\bc i$ and the $(A,\bc)$-bimodule $i\bc$ realise a Morita equivalence between $A$ and $\bc$, where an indecomposable $\bc$-module $M$ corresponds to the $A$-module $iM$. See e.g. \cite[(38.2)]{ThevenazBook}.
\par
Furthermore, vertices and sources of the indecomposable $\bc$-modules are detected from the source algebra $A$ as follows. If $N$ is an indecomposable $A$-module, then a minimal subgroup $Q\leq D$ such that $N$ is isomorphic to a direct summand of $A\otimes_{kQ}Z$ for some indecomposable direct summand $Z$ of $\Res^{A}_{kQ}(N)$ is called a vertex of $N$ and $Z$ is called a $kQ$-source of~$M$.
(See \cite[\S6.4]{LinckBook}). 
The pair $(Q,Z)$ is then also a vertex-source pair of the indecomposable $\bc$-module $\bc i\otimes_A N$ by \cite[Theorem~6.4.10]{LinckBook}. The converse does not hold in general, but it is true for the block~$\bc$, since it is a nilpotent cyclic block. (See \cite[Remark 6.5]{LinckKlein4}.) Thus, taking $Z=k$ allows us to recover the trivial source $\bc$-modules from $kD$.

\begin{lem}\label{lem:tsc}
Let $M$ be the unique indecomposable trivial source $\bc$-module with vertex \linebreak $1<Q\leq D$. Then $M$ corresponds to the $kD$-module
$$U_Q(W):=\left(\Ind_Q^D\circ\Capp\circ\Res^D_Q\right)\!(W)$$
under the above Morita equivalences, and $\ell(M)=\dim_k U_Q(W)$. 
\end{lem}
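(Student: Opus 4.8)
The plan is to exploit the explicit Morita equivalence $kD\text{-mod}\,\sim_M\,A\text{-mod}\,\sim_M\,\bc\text{-mod}$ described in §\ref{ssec:sourceAlgebra}, under which an indecomposable $kD$-module $U$ corresponds to the $A$-module $W\otimes_k U$ and thence to the $\bc$-module $\bc i\otimes_A(W\otimes_k U)$. First I would recall that, since $\bc$ is a nilpotent cyclic block, the vertex-source pair of an indecomposable $A$-module and that of the corresponding $\bc$-module coincide (the cited \cite[Theorem~6.4.10]{LinckBook} together with \cite[Remark~6.5]{LinckKlein4}); hence it suffices to identify, for each $1<Q\le D$, the unique indecomposable $kD$-module $U$ such that $W\otimes_k U$ has vertex $Q$ and trivial source, i.e. such that $k$ is a $kQ$-source of $W\otimes_k U$ in the source-algebra sense, and then to compute $\dim_k(\bc i\otimes_A(W\otimes_k U))$, which equals $\dim_k U$ because both Morita equivalences preserve $k$-dimension in the appropriate normalisation (here one uses that $\bc$ and $A=S\otimes_k kD$ with $S=\End_k(W)$ are related by the bimodule $\bc i$, and that tensoring with $W$ over $k$ multiplies dimensions by $\dim_k W$ on both the $A$-side numerator and the endomorphism-ring side, so the net effect on the $\bc$-module is that $\dim_k M=\dim_k U$).

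The core of the argument is the identification $U=U_Q(W)=\Ind_Q^D\!\big(\Capp(\Res^D_Q W)\big)$. I would argue as follows. By the source-algebra description of vertices in $A\text{-mod}$, an $A$-module $W\otimes_k U$ has vertex $Q$ and source the trivial module $k$ iff $W\otimes_k U$ is a direct summand of $A\otimes_{kQ}k\cong (S\otimes_k kD)\otimes_{kQ}k\cong S\otimes_k \Ind_Q^D(k)$, equivalently (using the $W\otimes_k(-)$ Morita equivalence, which sends $\Ind_Q^D(-)\otimes\,?$ to $S\otimes_k\Ind_Q^D(-)$ compatibly) iff $U$ is the unique indecomposable direct summand of $W^*\otimes_k W\otimes_k\Ind_Q^D(k)$ of vertex $Q$ — but a cleaner route is to use Knörr's/Dade's computation of sources under the endo-permutation twist. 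Concretely: since $W$ is an endo-permutation $kD$-module, $\Res^D_Q W$ is endo-permutation, and by uniqueness of the cap, $\Res^D_Q W\cong \Capp(\Res^D_Q W)\oplus(\text{lower-vertex stuff})$; a source of the cap $\Capp(\Res^D_Q W)$ (as a $\bc$-module via the source algebra, or equivalently the statement that $W$ restricted to $Q$ has cap with vertex $Q$) is exactly what controls which $kD$-module acquires trivial source after twisting by $W$. The relevant formal statement — that in $A=S\otimes_k kD$ the indecomposable module $W\otimes_k\big(\Ind_Q^D\Capp(\Res^D_Q W)\big)$ has vertex $Q$ and trivial source, and is the unique such in its $\Omega^2$-orbit — is essentially \cite[Remark 6.5]{LinckKlein4} / the theory of nilpotent blocks in \cite[§8.11]{LinckBook}, applied with $U_Q(W)$ in the slot of the $kQ$-source; I would invoke those references and check the vertex and source bookkeeping: $\Res^D_Q U_Q(W)$ contains $\Capp(\Res^D_Q W)$ as a summand of vertex $Q$, and $W\otimes_k U_Q(W)$ restricted to $Q$ contains $\Res^D_Q W\otimes_k\Capp(\Res^D_Q W)\cong(\text{permutation part})\oplus\cdots$ with a trivial summand, so $k$ is a $kQ$-source, while minimality of $Q$ follows because $U_Q(W)$ is genuinely $Q$-projective (it is induced from $Q$) but $W\otimes_k U_Q(W)$ is not $Q'$-projective for $Q'<Q$ by a Mackey-plus-dimension count, or more cleanly by uniqueness of the trivial-source orbit with vertex $Q$ already established in §\ref{sec:location}.

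Finally, for the dimension count $\ell(M)=\dim_k U_Q(W)$: the composition length $\ell(M)$ of the uniserial $\bc$-module $M$ equals its $k$-dimension divided by $d=\dim_k V$ (by Lemma~\ref{lem:complength}(c), all composition factors are $V$), but since we work in the source algebra where the simple module corresponds to $W$ itself (with $\dim_k W=d$ after the identification, as $W$ is the source of $V$ and in a nilpotent cyclic block $\dim_k V=\dim_k W$), and the $kD$-side simple is $k$ of dimension $1$, the Morita correspondence rescales so that $\ell(M)$ on the $\bc$-side equals the composition length of $U_Q(W)$ as a $kD$-module — and a $kD$-module has composition length equal to its $k$-dimension. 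Hence $\ell(M)=\dim_k U_Q(W)$, as claimed. The main obstacle I anticipate is pinning down the exact source of $W\otimes_k U_Q(W)$ inside the source algebra $A=S\otimes_k kD$ — i.e. verifying carefully that twisting the induced module $\Ind_Q^D\Capp(\Res^D_Q W)$ by $W$ produces precisely trivial source and not some other $kQ$-source — which requires the endo-permutation calculus (uniqueness of caps, behaviour under $\Ind$ and $\Res$, and the fact that $\Capp(\Res^D_Q W)\otimes_k\Res^D_Q W$ has $k$ as a summand since $\Capp(\Res^D_Q W)^*$ is its inverse in the Dade group); I would lean on \cite[§3--§4]{TheSurvey} and \cite[Remark 6.5]{LinckKlein4} rather than redo this from scratch.
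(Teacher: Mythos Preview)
Your outline is essentially correct, but it is considerably less direct than the paper's argument, and the detour you anticipate as the ``main obstacle'' is one the paper avoids entirely.

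The paper does not verify after the fact that $W\otimes_k U_Q(W)$ has trivial source and vertex~$Q$; instead it computes $A\otimes_{kQ}k$ head-on. Using $S=W\otimes_k W^*$ and the tensor identity $M\otimes_k\Ind_Q^D(N)\cong\Ind_Q^D(\Res^D_Q(M)\otimes_k N)$, one gets
\[
A\otimes_{kQ}k\;\cong\; W\otimes_k\Ind_Q^D\!\bigl(\Res^D_Q(W^*)\bigr)
\;\cong\; \bigl(W\otimes_k U_Q(W^*)\bigr)^{\oplus a}\;\oplus\;\bigl(W\otimes_k(\text{lower-vertex stuff})\bigr),
\]
and then $W^*\cong W$ because $D$ is cyclic. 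Since any indecomposable $A$-module with vertex~$Q$ and trivial source must be a summand of $A\otimes_{kQ}k$, this computation hands you $U_Q(W)$ immediately and simultaneously shows it is the only summand with vertex~$Q$. Your approach---checking that $\Res^D_Q(W\otimes_k U_Q(W))$ has a trivial summand via the Dade-group inverse relation, and then arguing separately for minimality of~$Q$---works but is longer, and the minimality step you flag as delicate is exactly what the direct computation makes automatic.

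For $\ell(M)=\dim_k U_Q(W)$: your argument is over-engineered, and the intermediate claim $\dim_k V=\dim_k W$ is not needed (and not obviously true without further argument). Morita equivalences preserve composition length, so $\ell(M)=\ell(U_Q(W))$ as a $kD$-module, and the latter equals $\dim_k U_Q(W)$ since the unique simple $kD$-module is one-dimensional. That is all the paper uses.
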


\begin{proof}
We need to determine the unique  indecomposable  $kD$-module $U_Q:=U_Q(W)$ such that \linebreak $W\otimes_k U_Q=iM$ has vertex $Q$ and a trivial $kQ$-source.\\
Since $W$ is a capped endo-permutation $kD$-module, its restriction to $Q$ is again capped endo-permutation and 
$$\Res^D_Q(W)=\Capp(\Res^D_Q(W))^{\oplus a}\oplus X\,,$$
where $a\geq 1$ is an integer and all the direct summands of $X$ have vertices strictly contained in~$Q$. It follows that  as $kD$-modules we have 
\begin{equation*}
\begin{split}
 A\otimes_{kQ} k \cong S\otimes_k kD\otimes_{kQ} k& \cong (W\otimes_k W^*)\otimes_k kD\otimes_{kQ}k \\
                                                          & \cong W\otimes_k(W^*\otimes_k\Ind_Q^D(k))\\
                                                          & \cong  W\otimes_k \Ind_Q^D(\Res^D_Q(W^*)\otimes_k k) \\
                                                          & \cong  W\otimes_k \Ind_Q^D(\Res^D_Q(W^*))   \\
                                                          & \cong  W\otimes_k  \left(\Ind_Q^D\circ\Capp\circ\Res^D_Q\right)\!(W^*)^{\oplus a}\oplus    W\otimes_k X  .                                                                 
\end{split}
\end{equation*}
Since $D$ is cyclic $W^{*}\cong W$ and $iM$ must correspond, up to isomorphism, to the unique indecomposable summand of $A\otimes_{kQ} k$ with vertex $Q$.
Hence  $U_Q=\left(\Ind_Q^D\circ\Capp\circ\Res^D_Q\right)\!(W)$. \\
The second claim follows from the above Morita equivalences. 
\end{proof}

\noindent Therefore, we need to compute the length of the $kD$-module $U_Q(W)$ for an arbitrary non-trivial $p$-subgroup $Q$ of $D$ and an arbitrary endo-permutation $kD$-module $W$, on which $D_1$ acts trivially.

\vspace{2mm}

\subsection{Restriction in the Dade group} \label{ssec:ep}
Endo-permutation modules over cyclic $p$-groups  were classified by Dade \cite{DadeI&II}.  A precise description of this classification is given by \cite[Exercise~(28.3)]{ThevenazBook} and \cite[Theorem 5.2]{TheSurvey}. Namely, the Dade group of a cyclic $p$-group $D\cong C_{p^n}$ with $p^n\geq 2$  is
$$\dade(D)=\langle \Omega_{D/D_i}(k) \mid 0\leq i \leq n-1 \rangle 
\cong 
\begin{cases}
 (\IZ/2)^{n-1}        & \text{if } p=2\,, \\
 (\IZ/2)^{n}           & \text{if }p\geq 3.
\end{cases}$$
(Notice that if $p=2$, then $\Omega_{D/D_{n-1}}(k)\cong k$, hence the missing $\IZ/2$ factor.)
In other words, the indecomposable capped endo-permutation  $kD$-module are, up to isomorphism, precisely the modules of the form
$$W_D(a_0,\ldots,a_{n-1}):=\Omega_{D/D_0}^{a_0}\circ\Omega_{D/D_1}^{a_1}\circ\cdots\circ\Omega_{D/D_{n-1}}^{a_{n-1}}(k)$$
with  $a_i\in\{0,1\}$ for each $0\leq i\leq n-1$. Moroever, we assume that $i_0<i_1<\ldots< i_s$ are the indices such that $a_{i_0}=\ldots=a_{i_s}=1$ and $a_i=0$ if $i\in\{0,\ldots,n-1\}\setminus\{i_0,\ldots, i_s\}$, and we set $s:=-1$ if $W_D(a_0,\ldots,a_{n-1})=k$. \\

If  $0\leq i\leq n-1$ and $M_b$ denotes the unique (up to isomorphism) indecomposable $kD$-module of dimension $1\leq b<p^{n-i}=\dim_k(k[D/D_i])$, then 
$$\dim_k(\Omega_{D/D_i}(M_b))= p^{n-i}-b\,.$$
Therefore an induction argument yields the following dimension formula:
$$\dim_k(W_D(a_0,\ldots,a_{n-1}))=\sum_{j=0}^{s}(-1)^j p^{n-i_j}+(-1)^{s+1}\,.$$
Now, restriction from $D$ to an arbitrary $p$-subgroup $D_i\leq D$ ($0\leq i\leq n$) and inflation from an arbitrary quotient $D/D_i$ ($0\leq i\leq n$)   induce
group homomorphisms
$$\Res^{D}_{D_i}:\dade(D)\lra \dade(D_i),\, M\mapsto \Capp(\Res^D_{D_i}(M))$$
and 
$$\Inf_{D/D_i}^{D}:\dade(D/D_i)\lra \dade(D),\, M\mapsto \Inf_{D/D_i}^{D}(M)\,.$$
\medskip 

In order to compute  $\Res^{D}_{D_i}(W_D(a_0,\ldots,a_{n-1}))$ it suffices to compute $\Res^{D}_{D_i}$ on the generators of~$\dade(D)$. 
Let $0\leq a\leq n-1$. Because we defined  $\Omega_{D/D_a}(k)$ to be a relative $D_a$-projective cover of the trivial $k[D/D_a]$-module, we have
$$\Omega_{D/D_a}(k)\cong\Inf_{D/D_a}^{D}(\Omega(k))\,.$$
This yields 
\begin{equation*}
\begin{split}
    \Res^{D}_{D_i}\left(\Omega_{D/D_a}(k) \right) & \cong  \Res^{D}_{D_i}\left( \Inf_{D/D_a}^{D}(\Omega(k)) \right)   \\
   									& \cong \left( \Inf_{D_i/D_i\cap D_a}^{D_i}\circ\Iso(\varphi^{-1})\circ\Res^{D/D_a}_{D_iD_a/D_a}\right) \left( \Omega(k) \right)   \\
									& \cong \begin{cases}  \left( \Inf_{D_i/D_i}^{D_i}\circ\Res^{D/D_a}_{D_a/D_a} \right) \!   \left( \Omega(k) \right)     & \smallskip \text{if } a\geq i\,, \\
									                                     \left( \Inf_{D_i/D_a}^{D_i}\circ\Res^{D/D_a}_{D_i/D_a} \right) \!   \left( \Omega(k) \right)  & \text{if } a<i \,, \end{cases}           
\end{split}
\end{equation*}
where $\varphi: \begin{tikzcd}[cramped, sep=small] D_i/D_i\cap D_a \arrow[r,"\cong"] & D_iD_a/D_a \end{tikzcd}$ is the canonical isomorphism. Therefore, taking the caps of these modules yields

\begin{equation*}
\begin{split}
   \Capp\circ \Res^{D}_{D_i}\left(\Omega_{D/D_a}(k) \right) & \cong \begin{cases}
   													   \left(\Capp\circ \Inf_{D_i/D_i}^{D_i} \right) \!\left( \Res^{D/D_a}_{D_a/D_a}\left(\Omega(k) \right)\right)     &\smallskip  \text{if } a\geq i, \\
									                                       \left( \Capp\circ  \Inf_{D_i/D_a}^{D_i} \right)  \! \left( \Res^{D/D_a}_{D_i/D_a}\left(\Omega(k)\right) \right)  & \text{if } a<i\,, \end{cases}  \\
									                                       &  \cong \begin{cases} 
									                                        \Inf_{D_i/D_i}^{D_i}(k)     & \text{if } a\geq i, \\
									                                         \left(\Capp\circ \Inf_{D_i/D_a}^{D_i}\right) \! \left( \Omega(\Res^{D/D_a}_{D_i/D_a} (k))\oplus (\text{projective}) \right)  & \text{if } a<i \,,
									                                        \end{cases}  \\
									                                        &  \cong \begin{cases}  k     & \text{if } a\geq i, \\
									                                        \Inf_{D_i/D_a}^{D_i}\left( \Omega(k) \right)  & \text{if } a<i \,,\end{cases}  \\
									                                         &  \cong \begin{cases}  k     & \text{if } a\geq i, \\
									                                        \Omega_{D_i/D_a}(k)  & \text{if } a<i \,.\end{cases}  
\end{split}
\end{equation*}
It follows that  $\ker(\Res^{D}_{D_i})=\langle \Omega_{D/D_a}(k) \mid i \leq a \leq n-1 \rangle$ and 
$$\Capp\circ \Res^{D}_{D_i}\left(W_D(a_0,\ldots,a_{n-1})\right)= \Omega_{D_{i}/D_0}^{a_0}\circ\Omega_{D_i/D_1}^{a_1}\circ\cdots\circ\Omega_{D_i/D_{i-1}}^{a_{i-1}}(k) =W_{D_i}(a_0,\ldots,a_{i-1})\,,$$
so that 
$$\dim_k\left(\Capp\circ \Res^{D}_{D_i}\left(W_D(a_0,\ldots,a_{n-1})\right) \right)=\sum_{0\leq i_j<i}(-1)^j p^{i-i_j}+(-1)^{|\{j\mid 0\leq i_j<i\}|}\,.$$

\vspace{2mm}
\subsection{The case $D\cong C_2$}\label{ssec:char2}

In characteristic $p=2$, it is always the case that $e=1$, since $e\mid p-1$.  If, moreover, the defect group $D$ is cyclic of order $2$, then $kD$ contains precisely two indecomposable modules: $k\cong\Omega(k)$ and $kC_2$. It follows that  the block $\bB$ contains precisely two indecomposable modules: a unique non-projective indecomposable module $S_1$, which is simple and the projective cover of $S_1$, which is uniserial of length~2. The former module is obviously a trivial source module with vertex $D=D_1$ and the latter module a trivial source module with vertex~$D_0=\{1\}$. 
Since $\Gamma_s(\bB)$ consists of a single vertex, we may set $d^{+}(S_1)=d^{-}(S_1)=0$. We note further that in this case, $\dade(D)=\{k\}$, so that the capped endo-permutation $kD$-module $W$ of the introduction is the trivial $kC_2$-module.

\vspace{6mm}
\section{The classification of the trivial source $\bB$-modules}\label{sec:mainresults} 

With the notation and the results of Section~\ref{sec:location}, we can state our two main theorems.
From now on $W$ denotes  the endo-permutation $kD$-module $W$ of \S\ref{ssec:sourceAlgebra} (resp. of \S\ref{ssec:char2} if $D\cong C_2$) which comes from Puig's description of the source algebra $A$ of the block $\bc$. Hence, it can be assumed that $W$ is by definition  a source of the unique simple $\bc$-module. \par
Notice that  by \cite[Theorem~2.7]{Linck96} $W$ is precisely  the module  $W$  of the introduction parametrising the source-algebra-equivalence class of the block $\bB$. Moreover, by \cite[Theorem~2.7]{Linck96}, as~$D_1$ acts trivially on~$W$, we have 
$$W=\Omega_{D/D_1}^{a_1}\circ\cdots\circ\Omega_{D/D_{n-1}}^{a_{n-1}}(k)$$
for intergers $a_1,\ldots,a_{n-1}\in\{0,1\}$.
In other words, with the notation of \S\ref{ssec:ep}, 
$$W=W_D(a_0,a_1,\ldots,a_{n-1})$$
with $a_0=0$.
Finally, for each $1\leq i\leq n$ we set $\ell_i:=\dim_k \Capp\left(\Res^D_{D_i}(W)\right)$. 

\enlargethispage{2mm}
\begin{thm}\label{thm:mainReform}\label{thm:mainA}
Let $\bB$ be a block of $kG$ with a non-trivial cyclic defect group $D\cong C_{p^n}$ ($n\geq 1$) and let~$W$ be the indecomposable capped endo-permutation $kD$-module associated to $\bB$.  Assume, moroever, that $W= W_D(0,a_1,\ldots,a_{n-1})$ and let $i_0<i_1<\ldots< i_s$ be the indices such that $a_{i_0}=\ldots=a_{i_s}=1$ and $a_i=0$ if ${i\in\{1,\ldots,n-1\}\setminus\{i_0,\ldots, i_s\}}$. 
Let $1\leq i\leq n$ and let $X$ be  a non-projective indecomposable trivial source $\bB$-module with vertex~$D_i$.  Then
$$d^+(X)=\ell_i\cdot p^{n-i}-1\,,$$
where $\ell_i=\sum_{0\leq i_j<i}(-1)^j p^{i-i_j}+(-1)^{|\{j\mid 0\leq i_j<i\}|}$\,.
\end{thm}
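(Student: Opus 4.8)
The plan is to stitch together the reductions established in Section~\ref{sec:location}: Green correspondence moves us from $\bB$ to $\bb$, Clifford theory together with Fong--Reynolds moves us from $\bb$ to the nilpotent block $\bc$, and Puig's description of a source algebra of $\bc$ combined with the restriction computation in the Dade group from \S\ref{ssec:ep} produces the numerical answer. First I would dispose of the case $D\ncong C_2$, so that $\Gamma_s(\bB)$ genuinely has two boundaries and all the results of Section~\ref{sec:location} apply; the case $D\cong C_2$ will be handled separately at the very end by direct inspection using \S\ref{ssec:char2}.

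So let $X$ be a non-projective indecomposable trivial source $\bB$-module with vertex $D_i$, $1\le i\le n$. By \S\ref{ssec:BrauerCorresp} its Green correspondent $f(X)$ lies in $\bb$, has the same vertex $D_i$ and the same source, hence is a trivial source $\bb$-module with vertex $D_i$; and Corollary~\ref{cor:d+GreenCorresp} gives $d^+(X)=d^+(f(X))$. Letting $M$ denote the unique indecomposable trivial source $\bc$-module with vertex $D_i$, Corollary~\ref{cor:d+c} identifies the trivial source $\bb$-modules with vertex $D_i$ as precisely the indecomposable direct summands of $\Ind_{C_G(D_1)}^{N_1}(M)$, each of them satisfying $d^+=\ell(M)-1$. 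Since $f(X)$ is one of these, I obtain $d^+(X)=\ell(M)-1$, and the whole problem is reduced to computing the composition length $\ell(M)$.

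For that I would invoke Lemma~\ref{lem:tsc}, which (as $1<D_i\le D$) gives $\ell(M)=\dim_k\bigl(\Ind_{D_i}^D\circ\Capp\circ\Res^D_{D_i}\bigr)(W)$, where $W$ is the endo-permutation $kD$-module coming from Puig's description of the source algebra of $\bc$; by \cite[Theorem~2.7]{Linck96} this is exactly the module $W=W_D(0,a_1,\dots,a_{n-1})$ of the statement. Since induction scales dimension by $|D:D_i|=p^{n-i}$, this reads $\ell(M)=p^{n-i}\,\dim_k\Capp\bigl(\Res^D_{D_i}(W)\bigr)=p^{n-i}\,\ell_i$, whence $d^+(X)=\ell_i\,p^{n-i}-1$. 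The closed formula for $\ell_i$ is then read off the restriction computation in the Dade group carried out in \S\ref{ssec:ep}, where one finds $\Capp\bigl(\Res^D_{D_i}(W_D(0,a_1,\dots,a_{n-1}))\bigr)=W_{D_i}(a_0,\dots,a_{i-1})$; applying the dimension formula of \S\ref{ssec:ep}, now over the cyclic group $D_i$, yields $\ell_i=\sum_{0\le i_j<i}(-1)^j p^{i-i_j}+(-1)^{|\{j\,\mid\,0\le i_j<i\}|}$. For the remaining case $D\cong C_2$ one has $n=1$, $W=k$, and $X=S_1$ with $d^+(X)=0$ by \S\ref{ssec:char2}, while the right-hand side equals $\ell_1\cdot p^0-1=0$ since $\{j\mid 0\le i_j<1\}=\varnothing$ forces $\ell_1=1$.

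I do not expect a genuine obstacle: once Corollaries~\ref{cor:d+GreenCorresp} and~\ref{cor:d+c}, Lemma~\ref{lem:tsc}, and the Dade-group computation of \S\ref{ssec:ep} are available, the argument is essentially pure assembly. The only points requiring a little care are the identification, via \cite[Theorem~2.7]{Linck96}, of the module $W$ attached to the source algebra of $\bc$ with the module $W$ parametrising the source-algebra class of $\bB$, and keeping the indexing straight when transporting the dimension formula of \S\ref{ssec:ep} from $D$ down to the subgroup $D_i$.
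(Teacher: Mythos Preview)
Your proposal is correct and follows essentially the same route as the paper's own proof: the chain Corollary~\ref{cor:d+GreenCorresp} $\to$ Corollary~\ref{cor:d+c} $\to$ Lemma~\ref{lem:tsc} $\to$ \S\ref{ssec:ep}, with the $D\cong C_2$ case handled separately via \S\ref{ssec:char2}, is exactly what the paper does (only treating the $C_2$ case first rather than last). Your remark about identifying the $W$ from the source algebra of~$\bc$ with the $W$ parametrising the source-algebra class of~$\bB$ via \cite[Theorem~2.7]{Linck96} is also precisely the point the paper addresses in the preamble to Section~\ref{sec:mainresults}.
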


\begin{proof}If $D\cong C_2$, then the claim is straightforward from \S\ref{ssec:char2}. 
Else, it follows from Corollary~\ref{cor:d+GreenCorresp}, Corollary~\ref{cor:d+c} and Lemma~\ref{lem:tsc} that
$$d^+(X)=\ell(f(X))-1=\ell(M)-1=\dim_k (U_{D_i}(W))-1=\ell_i\cdot p^{n-i}-1\,,$$
where $M$ denotes  the unique trivial source $\bc$-module with vertex $D_i$ and 
$$U_{D_i}(W)=\left(\Ind_{D_i}^D\circ\Capp\circ\Res^D_{D_i}\right)\!(W)$$
is  the corresponding $kD$-module given by Lemma~\ref{lem:tsc}. Finally, \S\ref{ssec:ep} yields
$$\ell_i=\dim_k \Capp\left(\Res^D_{D_i}(W)\right)=\sum_{0\leq i_j<i}(-1)^j p^{i-i_j}+(-1)^{|\{j\mid0\leq i_j<i\}|}\,.$$
\end{proof}

As a corollary, we emphasise some cases in which the location of the trivial source $\bB$-modules in $\Gamma_s(\bB)$ is particularly easy to compute. 

\begin{cor}{\ }\label{cor:D1Dn}
\begin{enumerate}
\item[\rm(a)] If $X$ is a trivial source $\bB$-module with vertex $D_1$, then  $d^{+}(X)=p^{n-1}-1$.
\item[\rm(b)] If $X$ is a trivial source $\bB$-module with vertex $D=D_n$, then  $d^{+}(X)=\dim_k W-1\,.$
\item[\rm(c)] Let $X$ be a hook of $\bB$ and let  $\chi\in\Irr^{\circ}(\bB)$ be the character afforded by $X$. Then $X$ is a trivial source module if and only if  $W=k$ and $\chi>0$. 
\item[\rm(d)] If $\bB=\bB_0(kG)$ and $X$ is a trivial source $\bB$-module with vertex $D_i$ for $1\leq i\leq n$, then  $d^{+}(X)=p^{n-i}-1$.
\end{enumerate}
\end{cor}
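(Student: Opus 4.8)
The plan is to deduce all four statements from Theorem~\ref{thm:mainA} by evaluating, in each case, the integer $\ell_i=\dim_k\Capp\bigl(\Res^D_{D_i}(W)\bigr)$ that enters the formula $d^+(X)=\ell_i\,p^{n-i}-1$; for (d) this first requires identifying~$W$.

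For (a) I would use that $D_1$ acts trivially on~$W$ (\S\ref{ssec:sourceAlgebra}), so that $\Res^D_{D_1}(W)$ is a direct sum of trivial $kD_1$-modules, whence $\ell_1=1$; equivalently, the hypothesis $a_0=0$ makes the index set $\{j\mid 0\le i_j<1\}$ empty in the formula for $\ell_i$, so that $\ell_1=(-1)^0=1$. Thus $d^+(X)=p^{n-1}-1$. For (b) one has $i=n$ and $\Res^D_{D_n}(W)=W=\Capp(W)$, since $W$ is indecomposable of vertex~$D$; hence $\ell_n=\dim_k W$ and Theorem~\ref{thm:mainA} gives $d^+(X)=\dim_k W-1$. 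For (d) the essential point is that $\bB=\bB_0(kG)$ forces $W=k$: then $D$ is a Sylow $p$-subgroup of~$G$, the block $\bb$ is $\bB_0(kN_1)$ by Brauer's third main theorem, and the nilpotent block $\bc$ of $C_G(D_1)$ used in Section~\ref{sec:location} is the principal block $\bB_0\bigl(kC_G(D_1)\bigr)$, whose unique simple module is the trivial module; its $kD$-source~$W$ is then the trivial module~$k$, so $\ell_i=1$ for every~$i$ and $d^+(X)=p^{n-i}-1$.

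Part (c) requires slightly more. I would proceed as follows: (i) the hooks of~$\bB$ all have vertex $D=D_n$, because the simple $\bb$-modules have vertex~$D$ (see Section~\ref{sec:location}) and both the Green correspondence and $\Omega$ preserve vertices; (ii) by the opening remarks of Section~\ref{sec:location} there is exactly one $\Omega^2$-orbit of trivial source $\bB$-modules with vertex~$D$, and by part~(b) its members $Y$ satisfy $d^+(Y)=\dim_k W-1$; (iii) by Corollary~\ref{cor:positivity/negativity} and the ensuing discussion the hooks split into the boundary $\Omega^2$-orbit of those with $\chi>0$, on which $d^+=0$, and the boundary $\Omega^2$-orbit of those with $\chi<0$, on which $d^+=em-1=p^n-2$; (iv) hence the trivial source orbit with vertex~$D$ consists of hooks exactly when $\dim_k W-1\in\{0,\,p^n-2\}$, and it is then the $\chi>0$ orbit if $\dim_k W-1=0$ and the $\chi<0$ orbit if $\dim_k W-1=p^n-2$; (v) the value $p^n-2$ is impossible, because $D_1$ acting trivially on~$W$ means $W$ is inflated from an indecomposable $kC_{p^{n-1}}$-module, so that $\dim_k W\le p^{n-1}<p^n-1$ whenever $D\not\cong C_2$ (the case $D\cong C_2$ being covered by \S\ref{ssec:char2}, where the unique hook $S_1$ is a trivial source module, $W=k$, and $\chi_{\wh{S_1}}>0$). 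Combining (iv) and (v) leaves only $\dim_k W=1$, i.e.\ $W=k$, together with $\chi>0$, which is the claim. The main obstacle here is step~(i)---correctly identifying the vertex of every hook as the full defect group~$D$---together with the dimension estimate of step~(v); the other three parts are routine specialisations of Theorem~\ref{thm:mainA}.
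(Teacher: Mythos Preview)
Your proposal is correct. Parts (a), (b), and (d) are essentially the paper's argument---routine specialisations of Theorem~\ref{thm:mainA}, with your (d) spelling out in more detail why $W=k$ for the principal block (the paper simply asserts ``clearly $W=k$'').

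For (c) you take a genuinely different route. The paper does not use your dimension estimate (step (v)); instead it appeals directly to Lemma~\ref{lem:tscharacters}: if the hook $X$ is a trivial source module, then its character $\chi$ takes non-negative integer values at $p$-elements, so $\chi>0$ immediately, hence $d^+(X)=0$, and part~(b) then forces $\dim_k W=1$. The converse is the same as yours. The paper's argument is shorter and avoids the slight subtlety of bounding $\dim_k W$; your argument, on the other hand, avoids invoking the character-theoretic Lemma~\ref{lem:tscharacters} and stays entirely within the Auslander--Reiten/Dade-group framework of Section~\ref{sec:location}. Both are valid, and your dimension bound $\dim_k W\le p^{n-1}$ (from $W$ being inflated from $D/D_1$) is correct---indeed one can sharpen it to $\dim_k W\le p^{n-1}-1$ since $W$ has vertex $D$, though the weaker bound already suffices.
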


\begin{proof}{\ }
\begin{enumerate}
\item[\rm(a)] Since $D_1$ acts trivially on $W$, the cap of $\Res^{D}_{D_1}(W)$ is the trivial $kD_1$-module, whence $\ell_1=1$ and Theorem~\ref{thm:mainReform} yields $d^{+}(X)=\ell_1\cdot p^{n-1}-1=p^{n-1}-1$. 
\item[\rm(b)] Since $D=D_n$, by definition  $\ell_n=\dim_k W$ and Theorem~\ref{thm:mainReform} yields $d^{+}(X)=\ell_n\cdot p^{n-n}-1=\dim_k W-1$.
\item[\rm(c)] Recall that hooks have vertex $D$ and form the boundary of $\Gamma_s(\bB)$. First assume that~$X$ is a trivial source module. Therefore, $\chi$ takes non-negative integer values at non-trivial $p$-elements by Lemma~\ref{lem:tscharacters}, so that $\chi>0$. Thus, it follows from (b) that $0=d^{+}(X)=\dim_k W-1$, so that $W=k$. 
Conversely, if $W=k$, then by (b) the trivial source modules with vertex $D$ lie at positive distance zero from the boundary of $\Gamma_s(\bB)$. Thus, as $\chi>0$,~$X$ must be a trivial source module. 
\item[\rm(d)] If $\bB$ is the principal block, then clearly $W=k$, the trivial $kD$-module. Therefore $\ell_i=1$ for each $1\leq i\leq n$ and the claim follows Theorem~\ref{thm:mainReform}. 
\end{enumerate}
\end{proof}

We can now use Theorem~\ref{thm:mainA}, the classification of the indecomposable liftable $\bB$-modules in Appendix~\ref{AppA}, as well as the computations  of the distances of the la{tt}er modules to the boundary of $\Gamma_s(\bB)$ given in  Appendix~\ref{AppB} in order to classify the trivial source $\bB$-modules with vertex $D_i$ for each $1\leq i\leq n$.

\begin{thm}\label{thm:mainB}
Let $\bB$ be a block of $kG$ with a non-trivial cyclic defect group $D\cong C_{p^n}$, $e$~simple modules, exceptional multiplicity $m=(p^n-1)/e$, Brauer tree $\sigma(\bB)$, and  let~$W$ be the indecomposable capped endo-permutation $kD$-module associated to $\bB$. For each $1\leq i\leq n$, let $D_i\leq D$ with $|D_i|=p^i$.
\begin{enumerate}
\item[\rm(a)] If $e=1$  and the Brauer tree of $\bB$ is
$
\begin{tikzcd}
\sigma(\bB) =  \overset{\chi_1}{{\Circle}}  \arrow[r, dash,"S_1"]  & \overset{\chi_{\Lambda}}{{\CIRCLE}} \,,
\end{tikzcd}
$ 
then the following holds:
\begin{itemize}
\item[\rm(i)] if $\chi_{1}>0$, then $\bB$ has a unique indecomposable trivial source module with vertex $D_i$, which is uniserial of length $\ell_i\cdot p^{n-i}$; 
\item[\rm(ii)] if $\chi_{1}<0$,  then $\bB$ has a unique indecomposable trivial source module with vertex $D_i$, which is uniserial of length $p^n-\ell_i\cdot p^{n-i}$.
\end{itemize}
\item[\rm(b)] Assume now that $e>1$. Then, an indecomposable $\bB$-module $X$ is a  trivial source module with vertex $D_i$ if and only if $X$ corresponds to one of the modules in {\rm\bf(1)--(7)} below.
In types {\rm\bf(2)}--{\rm\bf(7)}, $m>1$ holds. 
\begin{itemize}
  \item[\rm\bf(1)] The vertex is $D_i=D$, $W=k$, and $X$ is a hook affording a character $\chi\in\Irr^{\circ}(\bB)$ such that $\chi>0$. 
  \item[\rm\bf(2)] The module $X$ corresponds to the  path
  $$
\xymatrix@R=0.0000pt@C=30pt{	
		{_{\chi_0}}&{_{\chi_1}}&{_{\chi_l}}&{_{\chi_{\Lambda}}}\\
		{\Circle} \ar@<0.3ex>[r]^{E_1}  &{\Circle}\ar@<0.3ex>[l]^{E_s}  \ar@{.}[r]    &{\Circle} \ar@<0.3ex>[r]^{E_{l+1}} & {\CIRCLE} \ar@<0.3ex>[l]^{E_{l+2}}
}
$$
where the  direction is  $\varepsilon=(1,-1)$,  $l\geq 0$, $\chi_0$ is a leaf of $\sigma(\bB)$ and one of the following holds:
\begin{itemize}
  \item[(i)] $\chi_{0}>0$, $e\mid(\ell_i\cdot p^{n-i}-1)$  and the multiplicity $\mu$ of $X$ is such that $2\leq \mu\leq m$ and 
     $$\mu=
    \begin{cases}
  m+1-\frac{\ell_i\cdot p^{n-i}-1}{e}    & \text{ if $l$ is odd}, \\
   \frac{\ell_i\cdot p^{n-i}-1}{e} +1   &\text{ if $l$ is even};
\end{cases}
   $$
   \item[(ii)]  $\chi_{0}<0$, $e\mid\ell_i$ and the multiplicity  $\mu$ of $X$ is such that $2\leq \mu\leq m$ and 
      $$\mu=
    \begin{cases}
  \frac{\ell_i\cdot p^{n-i}}{e} +1   & \text{ if $l$ is odd}, \\
  m+1- \frac{\ell_i\cdot p^{n-i}}{e}   &\text{ if $l$ is even}.
\end{cases}
   $$
\end{itemize}
  \item[\rm\bf(3)] The module $X$ corresponds to the  path
  $$
\xymatrix@R=0.0000pt@C=30pt{	
		{_{\chi_0}}&{_{\chi_{\Lambda}}}\\
		{\Circle} \ar@<0.3ex>[r]^{E_1}  &{\CIRCLE}\ar@<0.3ex>[l]^{E_2}  
}
$$
where the direction is $\varepsilon=(-1,1)$, $\chi_{\Lambda}$ is a leaf of $\sigma(\bB)$, and one of the following holds:
\begin{itemize}
  \item[(i)] $\chi_{\Lambda}>0$, $e\mid(\ell_i\cdot p^{n-i}-1)$ and the multiplicity $\mu$ of $X$ is such that $2\leq \mu\leq m-1$ and  $\mu=m-\frac{\ell_i\cdot p^{n-i}-1}{e}$;
   \item[(ii)] $\chi_{\Lambda}<0$, $e\mid\ell_i$ and the multiplicity $\mu$ of $X$ is such that $2\leq \mu\leq m-1$ and  \smallskip $\mu=\frac{\ell_i\cdot p^{n-i}}{e}$. 
\end{itemize}
\item[\rm\bf(4)] The module  $X$ corresponds to the  path
$$  \xymatrix@R=0.0000pt@C=30pt{	
     &{_{\chi_0}}&{_{\chi_1}}&{_{\chi_l}}&{_{\chi_{\Lambda}}}\\
      {\Circle}  &{\Circle} \ar@<0.3ex>[r]^{E_1}  \ar@<0.3ex>[l]^{E_{s}}  &{\Circle}\ar@<0.3ex>[l]^{E_{s-1}}  \ar@{.}[r]&{\Circle}\ar@<0.3ex>[r]^{E_{l+1}}&{\CIRCLE}\ar@<0.3ex>[l]^{E_{l+2}}
}
$$
where $l\geq 0$, the successor of $E_1$ around $\chi_0$ is $E_s$, the direction is $\varepsilon=(1,1)$, and one of the following holds:
\begin{itemize}
  \item[(i)] $\chi_{0}>0$, $e\mid(\ell_i\cdot p^{n-i}-1)$ and the multiplicity $\mu$ of $X$ is such that $2\leq \mu\leq m$ and   $$\mu=
    \begin{cases}
  m+1-\frac{\ell_i\cdot p^{n-i}-1}{e}    & \text{ if $l$ is odd}, \\
   \frac{\ell_i\cdot p^{n-i}-1}{e} +1   &\text{ if $l$ is even};
\end{cases}
   $$
   \item[(ii)] $\chi_{0}<0$, $e\mid\ell_i$ and the multiplicity $\mu$ of $X$ is such that $2\leq\mu\leq m$ and  
$$\mu=
    \begin{cases}
  \frac{\ell_i\cdot p^{n-i}}{e} +1   & \text{ if $l$ is odd}, \\
  m+1- \frac{\ell_i\cdot p^{n-i}}{e}   &\text{ if $l$ is even}.
\end{cases}
$$
\end{itemize}
\item[\rm\bf(5)] The module  $X$ corresponds to the  path
$$  \xymatrix@R=0.0000pt@C=30pt{	
      &{_{\chi_0}}&{_{\chi_1}}&{_{\chi_l}}&{_{\chi_{\Lambda}}}\\
      {\Circle} \ar@<0.3ex>[r]^{E_1}  &{\Circle}  \ar@<0.3ex>[r]^{E_{2}}  &{\Circle}\ar@<0.3ex>[l]^{E_{s}}  \ar@{.}[r]&{\Circle}\ar@<0.3ex>[r]^{E_{l+2}}&{\CIRCLE}\ar@<0.3ex>[l]^{E_{l+3}}
}
$$
where $l\geq 0$, the successor of $E_1$ around $\chi_0$ is $E_s$, the direction is $\varepsilon=(-1,-1)$,  and one of the following holds:
\begin{itemize}
  \item[(i)]  $\chi_{0}>0$, $e\mid(\ell_i\cdot p^{n-i}-1)$ and  the multiplicity $\mu$ of $X$ is such that $2\leq\mu\leq m$ and      
  $$\mu=
    \begin{cases}
  m+1-\frac{\ell_i\cdot p^{n-i}-1}{e}    & \text{ if $l$ is odd}, \\
   \frac{\ell_i\cdot p^{n-i}-1}{e} +1   &\text{ if $l$ is even};
\end{cases}
   $$
   \item[(ii)]  $\chi_{0}<0$, $e\mid\ell_i$ and  the multiplicity $\mu$ of $X$ is such that $2\leq\mu\leq m$ and
$$\mu=
    \begin{cases}
  \frac{\ell_i\cdot p^{n-i}}{e} +1   & \text{ if $l$ is odd}, \\
  m+1- \frac{\ell_i\cdot p^{n-i}}{e}   &\text{ if $l$ is even}.
\end{cases}
   $$
\end{itemize}
\item[\rm\bf(6)] The module  $X$ corresponds to the  path
 $$ \xymatrix@R=0.0000pt@C=30pt{
 	&& &\\
	{\Circle}\ar[ddr]^{E_{1}} & & &  \\
		&{_{\chi_0}}&{_{\chi_1}}&{_{\chi_l}}&{_{\chi_{\Lambda}}} \\
		&{\Circle}\ar[dddl]^{\:\:E_{s}} \ar@<0.3ex>[r]^{E_2}&{\Circle}\ar@<0.3ex>[l]^{E_{s-1}}\ar@{.}[r]&{\Circle}\ar@<0.3ex>[r]^{E_{l+2}}&{\CIRCLE}\ar@<0.3ex>[l]^{E_{l+3}}\\
		&& &\\
		&& &\\
		{\Circle}& & & \\
		&& &
	}
$$
where $l\geq 0$,  the successor of $E_1$ around $\chi_0$ is $E_s$,  the direction is $\varepsilon=(-1,1)$ and one of the following holds:
\begin{itemize}
  \item[(i)]  $\chi_{0}>0$, $e\mid(\ell_i\cdot p^{n-i}-1)$ and  the multiplicity $\mu$ of $X$ is such that $2\leq\mu\leq m$ and   
  $$\mu=
    \begin{cases}
  m+1-\frac{\ell_i\cdot p^{n-i}-1}{e}    & \text{ if $l$ is odd}, \\
   \frac{\ell_i\cdot p^{n-i}-1}{e} +1   &\text{ if $l$ is even};
\end{cases}
   $$
   \item[(ii)]  $\chi_{0}<0$, $e\mid\ell_i$ and the multiplicity $\mu$ of $X$ is such that $2\leq\mu\leq m$ and
$$\mu=
    \begin{cases}
  \frac{\ell_i\cdot p^{n-i}}{e} +1   & \text{ if $l$ is odd}, \\
  m+1- \frac{\ell_i\cdot p^{n-i}}{e}   &\text{ if $l$ is even}.
\end{cases}
   $$
\end{itemize}
\item[\rm\bf(7)] The module  $X$ corresponds to the  path
$$ \xymatrix@R=0.0000pt@C=30pt{
                & \\	
		{\Circle}\ar[ddr]^{E_{1}} &  \\
		&{_{\chi_\Lambda}} \\
		&{\CIRCLE}\ar[dddl]^{\:\:E_{2}}\\
		&\\
		&\\
		{\Circle}&  \\
		& 
}
$$
where  the successor of $E_1$ around $\chi_\Lambda$ is $E_2$, the direction is $\varepsilon=(-1,1)$, and one of the following holds:
\begin{itemize}
  \item[(i)]  $\chi_{\Lambda}>0$, $e\mid(\ell_i\cdot p^{n-i}-1)$ and  the multiplicity $\mu$ of $X$ is such that $1\leq\mu\leq m-1$ and $\mu=m-\frac{\ell_i\cdot p^{n-i}-1}{e}$;
  \item[(ii)] $\chi_{\Lambda}<0$, $e\mid\ell_i$ and  the multiplicity $\mu$ of $X$ is such that $1\leq\mu\leq m-1$ and $\mu=\frac{\ell_i\cdot p^{n-i}}{e}$. 
\end{itemize}
\end{itemize}
\end{enumerate}
\end{thm}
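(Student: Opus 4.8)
The plan is to combine Theorem~\ref{thm:mainA}, which pins down $d^+(X)$ in terms of the parameters $\ell_i$ and $p^{n-i}$, with the classification of the indecomposable liftable $\bB$-modules of Appendix~\ref{AppA} (Theorem~\ref{thm:classific_liftable}) and the distance computations of Appendix~\ref{AppB}. The key observation is that every trivial source $\bB$-module is liftable (indeed affords a trivial source lift), so the trivial source modules with vertex $D_i$ form a subset of the liftable modules; conversely, within each $\Omega^2$-orbit of $\Gamma_s(\bB)$ there is exactly one orbit with vertex $D_i$ and trivial source (as noted in Section~\ref{sec:location}), and by Theorem~\ref{thm:mainA} it sits at positive distance $d^+(X)=\ell_i\cdot p^{n-i}-1$ from the boundary. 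So the strategy is: \emph{first} list, using Appendix~\ref{AppB}, all liftable modules at that prescribed distance $d^+=\ell_i\cdot p^{n-i}-1$ from the positive boundary, organised by their path on $\sigma(\bB)$, direction and multiplicity; \emph{then} identify among these exactly which ones are genuine trivial source modules, and translate the distance condition into the arithmetic conditions on $\mu$ displayed in (1)--(7).

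First I would dispose of the degenerate cases. If $D\cong C_2$ then $e=1$, $m=1$ and the situation is trivial (see \S\ref{ssec:char2}); more generally, for $e=1$ the Brauer tree is an open path with the exceptional vertex at a leaf, $\bB$ is uniserial by Lemma~\ref{lem:complength}(a) (or its Morita-equivalence analogue), and the unique trivial source module with vertex $D_i$ is the uniserial module of length $d^+(X)+1=\ell_i\cdot p^{n-i}$ counted from the positive boundary; whether this reads as length $\ell_i\cdot p^{n-i}$ or $p^n-\ell_i\cdot p^{n-i}$ depending on which leaf is positive gives part~(a). The bulk of the work is part~(b), where $e>1$. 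Here I would first settle the vertex-$D$ case: hooks are exactly the modules on the boundary, and by Corollary~\ref{cor:D1Dn}(c) a hook is a trivial source module precisely when $W=k$ and its character is positive, which is type~(1); and when a trivial source module with vertex $D$ is \emph{not} a hook (i.e.\ $W\neq k$, so $d^+(X)=\dim_k W-1>0$), it appears among types (2)--(7) with $m>1$ forced.

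For $1\le i\le n$ and $X$ the unique trivial source orbit with vertex $D_i$, the heart of the argument is to match $X$ against the list of liftable modules of Appendix~\ref{AppA}. Each indecomposable liftable $\bB$-module is, by that classification, described by a path on $\sigma(\bB)$ (passing through the exceptional vertex), a direction $\varepsilon\in\{\pm1\}^2$, and a multiplicity $\mu$; one then computes, via Appendix~\ref{AppB}, its distance to the positive boundary of $\Gamma_s(\bB)$ as an explicit affine function of $\mu$ (linear with slope $\pm e$, with the sign and constant depending on the parity of the length $l$ of the path, on the direction, and on whether the relevant end vertex $\chi_0$ or $\chi_\Lambda$ is positive or negative). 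Setting this distance equal to $\ell_i\cdot p^{n-i}-1$ and solving for $\mu$ produces exactly the case distinctions in (2)(i)--(2)(ii), (4), (5), (6) (all of the form $\mu=m+1-\frac{\ell_i p^{n-i}-1}{e}$ or $\frac{\ell_i p^{n-i}-1}{e}+1$ according to the parity of $l$, with the $-1$ dropped when $\chi_0<0$), and the shorter formulas $\mu=m-\frac{\ell_i p^{n-i}-1}{e}$, resp.\ $\frac{\ell_i p^{n-i}}{e}$, for the ``short'' paths of types (3) and (7). The divisibility constraints $e\mid(\ell_i p^{n-i}-1)$ (when the relevant vertex is positive) or $e\mid \ell_i$ (when negative) are precisely the integrality requirements for $\mu$; note $p^{n-i}\equiv 1\pmod e$ since $e\mid p-1$, so $e\mid\ell_i p^{n-i}$ iff $e\mid\ell_i$, and similarly $e\mid(\ell_i p^{n-i}-1)$ iff $e\mid(\ell_i-1)$. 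The inequalities $2\le\mu\le m$ (or $1\le\mu\le m-1$ in the leaf cases) record that $X$ is non-projective, non-simple and lies strictly between the two boundaries, together with the fact that $d^+(X)+d^-(X)=em-1$.

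The main obstacle I anticipate is the bookkeeping of the \textbf{seven} configurations: one must verify that the shapes of paths in (2)--(7) are exactly those that can be occupied by a trivial source module, i.e.\ that the trivial source orbit lands on a specific ``column'' of the tube $(\IZ/e\IZ)A_{p^n-1}$ determined by the strengthened Brauer tree, and not on the others. This requires carefully reconciling the combinatorics of Green's walk around $\sigma(\bB)$ (which labels the boundary orbits and hence fixes the ``origin'' from which distances are measured) with Bleher--Chinburg's description of where a given path sits in $\Gamma_s(\bB)$, and with the precise statement of Theorem~\ref{thm:classific_liftable} (including the correction in Appendix~\ref{AppA} for the case where the exceptional vertex is a leaf, which is exactly what forces $m>1$ in types (2)--(7) and produces the separate treatment of (3) and (7) versus (2),(4),(5),(6)). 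Once the correspondence ``trivial source orbit with vertex $D_i$ $\leftrightarrow$ column at positive distance $\ell_i p^{n-i}-1$'' is fixed, each individual item in (1)--(7) is a routine, if lengthy, substitution; the risk lies entirely in getting the parity/sign conventions and the endpoint positivity conditions consistent across all cases, and in checking the boundary of the $\mu$-ranges (the extreme values $\mu=1$, $\mu=m$, $\mu=m-1$) where a path-module may degenerate to a hook, a simple module, or leave the stable quiver.
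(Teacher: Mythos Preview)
Your approach is essentially that of the paper: restrict to the liftable modules via Theorem~\ref{thm:classific_liftable}, compute for each path type its distance to the positive boundary using Proposition~\ref{prop:distances}, and equate with $\ell_i\, p^{n-i}-1$ from Theorem~\ref{thm:mainA} to solve for~$\mu$ and read off the divisibility constraints. The one step you do not flag explicitly is the elimination of type~(2') of Theorem~\ref{thm:classific_liftable} (the simple module labelling an exceptional leaf): the paper checks that its distances to the two boundaries are $e(m-1)$ and $e-1$, and since $1\le e-1\le p-2$ while $\ell_i\,p^{n-i}-1$ is always $0$ or at least $p-1$, this module is never a trivial source module; otherwise your plan matches the paper's proof case by case.
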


\begin{proof}{\ }
\begin{enumerate}
\item[\rm(a)]  If $D\cong C_2$, then the claim was proved in~\S\ref{ssec:char2}. Hence we may assume that $D\ncong C_2$. Since $e=1$ all indecomposable $\bB$-modules are uniserial by Lemma~\ref{lem:complength}(a). Let $X$ be a trivial source $\bB$-module with vertex $D_i$.  Then  $d^+(X)=\ell_i\cdot p^{n-i}-1$ by  Theorem~\ref{thm:mainReform}. If $\chi_1>0$, then $d^+(X)$ is the distance in $\Gamma_s(\bB)$ from $X$ to the hook $S_1$, and if $\chi_1<0$, then $d^-(X)$ is the distance  from $X$ to the hook $\Omega(S_1)$. Thus, the claim follows from \smallskip Lemma~\ref{lem:complength}(c). 
\item[\rm(b)]  Since trivial source modules are liftable, we go through the classification of the liftable $\bB$-modules provided by Theorem~\ref{thm:classific_liftable}(c), where, more precisely, modules of type {\rm(1)} are projective. Hence, indecomposable modules with vertex $D_i$ ($1\leq i\leq n-1$) can only correspond to modules  of type (2), (2'), (3), (4), (5), (6) or \smallskip (7).

\begin{itemize}
  \item[\rm 1.] If $X$ is of type (2), i.e. a hook, then by Corollary~\ref{cor:D1Dn},  $D_i=D$, $W=k$, and $X$  affords a character $\chi\in\Irr^{\circ}(\bB)$ such that $\chi>0$. This \smallskip yields~{\rm\bf(1)}.
  \item[\rm 2.] If $X$ is of type (2'), then $\chi_\Lambda$ is a leaf of $\sigma(\bB)$ and  $X$ is the simple $\bB$-module labelling this leaf. Therefore, by Proposition~\ref{prop:distances}(a) the distance from $X$ to one of the boundaries of $\Gamma_s(\bB)$ is  $(m-1)e$, thus the distance to the other boundary of $\Gamma_s(\bB)$ is 
$$me-1 -\left((m-1)e\right) =e-1\,.$$
Since $e>1$ and $e\mid (p-1)$, we have $1\leq e-1\leq p-2$. However, by Theorem~\ref{thm:mainA}  the distance from a trivial source module  with vertex $D_i$ 
 to each of the boundaries of $\Gamma_s(\bB)$ is either~$0$ or greater or equal to $p-1$. It follows that $X$ is never a trivial source \smallskip module.
  \item[\rm 3.]  If $X$ is of type (3) with $l\geq 0$ and $\chi_0$ is a leaf of $\sigma(\bB)$, then by Proposition~\ref{prop:distances}(b),  the distance from $X$ to the hook $E_1$ is given by 
$$d(X,E_1)=\begin{cases}
  e(m-\mu+1)    & \text{if $l$ is odd}, \\
   e(\mu-1)   & \text{if $l$ is even}.
\end{cases}$$
Now, $E_1$ affords the character $\chi_0$. Therefore, if $\chi_0>0$, then $d(X,E_1)=d^{+}(X)$. Hence,  by  Theorem~\ref{thm:mainA}, $X$ is a trivial source module if  $d(X,E_1)=\ell_i\cdot p^{n-i}-1$, that is if $e\mid(\ell_i\cdot p^{n-i}-1)$ and 
      $$\mu=
    \begin{cases}
  m+1-\frac{\ell_i\cdot p^{n-i}-1}{e}    & \text{ if $l$ is odd}, \\
   \frac{\ell_i\cdot p^{n-i}-1}{e} +1   &\text{ if $l$ is even}.
\end{cases}
   $$
If $\chi_0<0$, then $d(X,E_1)=d^{-}(X)$. Hence,   by  Theorem~\ref{thm:mainA}, $X$ is a trivial source module if  $\ell_i\cdot p^{n-i}-1=d^+(X)=em-1-d(X,E_1)$, that is if $e\mid\ell_i$ and 
      $$\mu=
    \begin{cases}
  \frac{\ell_i\cdot p^{n-i}}{e} +1   & \text{ if $l$ is odd}, \\
  m+1- \frac{\ell_i\cdot p^{n-i}}{e}   &\text{ if $l$ is even}.
\end{cases}
   $$
   This \smallskip yields~{\rm\bf(2)}. 
  \item[\rm 4.]  If $X$ is of type (3) with $l=0$ and $\chi_{\Lambda}$ is a leaf of $\sigma(\bB)$, then by Proposition~\ref{prop:distances}(b')  the distance from $X$ to the unique hook $H$ which is uniserial of length $m$ with all composition factors isomorphic to  $E_1=E_2$ is
 $$d(X,H)=e(m-\mu)\,.$$
Now, $H$ affords the character $\chi_\Lambda$. Therefore, if $\chi_\Lambda>0$, then $d(X,H)=d^{+}(X)$. Hence, by  Theorem~\ref{thm:mainA}, $X$ is a trivial source module if  
$$\ell_i\cdot p^{n-i}-1=e(m-\mu),\,$$ i.e. if  $e\mid(\ell_i\cdot p^{n-i}-1)$ and $\mu=m-\frac{\ell_i\cdot p^{n-i}-1}{e}$.\\
 If $\chi_\Lambda<0$, then $d(X,H)=d^{-}(X)$. Hence, by  Theorem~\ref{thm:mainA}, $X$ is a trivial source module if  $\ell_i\cdot p^{n-i}-1=d^+(X)=em-1-d(X,E_1)$, that is if $e\mid\ell_i$ and 
  $\mu=\frac{\ell_i\cdot p^{n-i}}{e}$.
     This \smallskip yields~{\rm\bf(3)}. 
  \item[\rm 5.] If $X$ is of type (4), then by Proposition~\ref{prop:distances}(c)  the distance from $X$ to the  unique hook~$H$ with  socle $E_1$ and head~$E_s$ is given by
$$
d(X,H)=
\left\{
	\begin{array}{lr}
		e(m-\mu+1)&\text{if } l\mbox{ is odd,}\\
		e(\mu-1)&\text{if } l \mbox{ is even}.
	\end{array}\right.
$$
The hook $H$ affords the character $\chi_{0}$. Therefore, the same computations as in  Case~3 above  \smallskip yield~{\rm\bf(4)}.

\item[\rm6.]  If $X$ is of type (5), then by Proposition~\ref{prop:distances}(d)  the distance from $X$ to the  unique hook~$H$ with  socle $E_1$ and head~$E_s$  is given by
$$
d(X,H)=
\left\{
	\begin{array}{lr}
		e(m-\mu+1)&\text{if } l\mbox{ is odd,}\\
		e(\mu-1)&\text{if } l \mbox{ is even}.
	\end{array}\right.
$$
The hook $H$ affords the character $\chi_{0}$. Therefore, the same computations as in  Case~3 above  \smallskip yield~{\rm\bf(5)}.
  \item[\rm 7.]   If $X$ is of type (6), then by Proposition~\ref{prop:distances}(e)  the distance from $X$ to the  unique hook~$H$ with socle $E_1$  and head $E_s$ is given by
$$
d(X,H)=
\left\{
	\begin{array}{lr}
		e(m-\mu+1)&\text{if } l\mbox{ is odd,}\\
		e(\mu-1)&\text{if } l \mbox{ is even}.
	\end{array}\right.
$$
The hook $H$ affords the character $\chi_{0}$. Therefore, the same computations as in  Case~3 above  \smallskip yield~{\rm\bf(6)}.

  \item[\rm 8.]  If $X$ is of type (7), then by Proposition~\ref{prop:distances}(f), the distance from $X$ to the unique hook  $H$ with socle $E_1$  and head $E_2$ is
$$d(X,H)=e(m-\mu)\,.$$
Moreover, the hook $H$ affords the character $\chi_\Lambda$.  Therefore, the same computations as in  Case~4 above  yield~{\rm\bf(7)}.
\end{itemize}
\end{enumerate}
\end{proof}

\begin{rem}
Theorem~\ref{thm:classific_liftable}(b) yields $\ell_i\cdot p^{n-i}-1\equiv -1 \pmod{e}$ or $\ell_i\cdot p^{n-i}-1\equiv 0 \pmod{e}$. In the former case $e\mid \ell_i$ as $e\mid (p-1)$ and in the latter case $e\mid (\ell_i\cdot p^{n-i}-1)$. 
\end{rem}

\begin{rem}[Cotrivial source modules]
If $X$ is a non-projective indecomposable cotrivial source $\bB$-module with vertex $D_i$,  then $\Omega(X)$ is a  non-projective indecomposable trivial source $\bB$-module with vertex $D_i$. Thus, $d^+(X)=p^n-1-d^+(\Omega(X))=p^{n}-\ell_i\cdot p^{n-i}$. It follows that cotrivial source modules can be classified in a similar fashion, replacing $\ell_i\cdot p^{n-i}-1$ with $p^{n}-\ell_i\cdot p^{n-i}$ in the proof of Theorem~\ref{thm:mainB}. 
\end{rem}

\newpage

\appendix 

\section{The classification of the indecomposable liftable modules in blocks with cyclic defect groups} \label{AppA}

We recall here a result of the first author and Naehrig \cite{HN12} classifying the indecomposable liftable modules in blocks with cyclic defect groups.
A minor correction must be brought to the original statement in case the exceptional vertex is a leaf of $\sigma(\bB)$.\\

The notation in use below to parametrise the indecomposable $\bB$-modules is based on standard results of Janusz \cite[\S5]{Jan69} and more recent work of Bleher-Chinburg~\cite{BleChi}. 
The main idea is as follows: following Janusz \cite[\S5]{Jan69}, each indecomposable $\bB$-module $X$ which is neither projective nor simple can be encoded using a \emph{path} on $\sigma(\bB)$, which is by definition  a certain connected subgraph of $\sigma(\bB)$. This path may be seen as an ordered sequence $(E_1,\ldots,E_s)$ of edges of  $\sigma(\bB)$, called \emph{top-socle sequence} of $X$, and  where $E_i,E_{i+1}$ have a common vertex for every $1\leq i\leq s-1$, the odd-labelled edges are in the head of $X$ and the even-labelled edge is in the socle of $X$, or conversely, and some edges may be passed twice if necessary.
Moroever \cite{BleChi} associates to each indecomposable $\bB$-module $X$ two further parameters:  a \emph{direction} $\varepsilon=(\varepsilon_1,\varepsilon_s)$ and a \emph{multiplicity} $\mu$. For  $i\in\{1,s\}$ we set $\varepsilon_i=1$ if $E_i$ is in the head of $X$ and $\varepsilon_i=-1$ if $E_i$ is in the socle of $X$. If $m=1$, then $\mu:=0$. If $m>1$, then $\mu$ corresponds to the number of times that a simple module $E_{j}$ connected to the exceptional vertex  occurs as a composition factor of~$X$ (this is independent of the choice of $E_j$).  The module $X$ is entirely parametrised by its top-socle sequence (i.e. path), direction and multiplicity.\\

\begin{thm}[{}{\cite[Theorem 2.1]{HN12} with correction to (c)(3)}]\label{thm:classific_liftable}
Let $\bB$ be a cyclic block with defect $n\geq 1$, $e$ simple modules, and exceptional multiplicity $m:=(p^n-1)/e$.
\begin{enumerate}[ \rm(a)]
\item
The number of indecomposable liftable $\bB$-modules equals $m+1$ if $e=1$, and $e(2m+1)$ if $e>1$. 
If $e=1$ all indecomposable $\bB$-modules are liftable. We thus assume that $e>1$ in the following.
\item 
Let $X$ be an indecomposable $\bB$-module. Then $X$ is liftable if and only if the minimal distance of $X$ to the boundary of $\Gamma_s(\bB)$ is of the form $ei$ for some $0\leq i\leq \lfloor(m-1)/2\rfloor$ or of the form 
$ei-1$ for some $0\leq i\leq \lfloor m/2\rfloor$.
\item 
Let $X$ be an indecomposable $\bB$-module. Then $X$ is liftable, if and only if it belongs to one of the types described in {\rm(1)}--{\rm(7)} below. In types {\rm(2'), \rm(3)}--{\rm(7)}, $m>1$ holds. 

\begin{enumerate}
  \item[\rm(1)] The module $X$ is projective.
  \item[\rm(2)] The module $X$ is a hook; in particular,  $X$ is uniserial with descending composition series corresponding to a counter-clockwise walk around a vertex $\chi\in\{\chi_1,\ldots,\chi_e,\chi_{\Lambda}\}$ of $\sigma(\bB)$, where each composition factor occurs with multiplicity $m$ if $\chi=\chi_{\Lambda}$. The character of any lift of $X$ is $\chi$.\\
  The number of modules of this type is $2e$. These are exactly the modules lying at the boundary of $\Gamma_s(\bB)$.
\item[\rm(2')] 
 In case $\chi_{\Lambda}$ is a leaf of $\sigma(\bB)$, then the module $X$ is the simple module labelling this leaf. In this case the character of any lift of $X$ is an exceptional character.
  \item[\rm(3)]  The module $X$ corresponds to the  path 
  $$
\xymatrix@R=0.0000pt@C=30pt{	
		{_{\chi_0}}&{_{\chi_1}}&{_{\chi_l}}&{_{\chi_{\Lambda}}}\\
		{\Circle} \ar@<0.3ex>[r]^{E_1}  &{\Circle}\ar@<0.3ex>[l]^{E_s}  \ar@{.}[r]    &{\Circle} \ar@<0.3ex>[r]^{E_{l+1}} & {\CIRCLE} \ar@<0.3ex>[l]^{E_{l+2}}
}
$$
where $l\geq 0$. In addition, in case $l>0$, then $\chi_0$ is a leaf of $\sigma(\bB)$, in case $l=0$ either $\chi_0$ or $\chi_{\Lambda}$ is a leaf. Moreover, 
the direction is $\varepsilon=(1,-1)$ and  the multiplicity $\mu$ staisfies:
\begin{enumerate}[\rm(i)]
\item  $2\leq \mu\leq m$ if $l\geq 0$ and $\chi_{0}$ is a leaf; and 
\item  $2\leq \mu\leq m-1$ if $l=0$ and $\chi_{\Lambda}$ is a leaf. 
\end{enumerate} 
  \item[\rm(4)] The module $X$ corresponds to the  path
$$  \xymatrix@R=0.0000pt@C=30pt{	
      &{_{\chi_0}}&{_{\chi_1}}&{_{\chi_l}}&{_{\chi_{\Lambda}}}\\
      {\Circle}  &{\Circle} \ar@<0.3ex>[r]^{E_1}  \ar@<0.3ex>[l]^{E_{s}}  &{\Circle}\ar@<0.3ex>[l]^{E_{s-1}}  \ar@{.}[r]&{\Circle}\ar@<0.3ex>[r]^{E_{l+1}}&{\CIRCLE}\ar@<0.3ex>[l]^{E_{l+2}}
}
$$
where $l\geq 0$, the successor of $E_1$ around $\chi_0$ is $E_s$, the direction is $\varepsilon=(1,1)$, and $2\leq \mu\leq m$.
  \item[\rm(5)] The module     
 $X$ corresponds to the  path 
$$  \xymatrix@R=0.0000pt@C=30pt{	
      &{_{\chi_0}}&{_{\chi_1}}&{_{\chi_l}}&{_{\chi_{\Lambda}}}\\
      {\Circle} \ar@<0.3ex>[r]^{E_1}  &{\Circle}  \ar@<0.3ex>[r]^{E_{2}}  &{\Circle}\ar@<0.3ex>[l]^{E_{s}}  \ar@{.}[r]&{\Circle}\ar@<0.3ex>[r]^{E_{l+2}}&{\CIRCLE}\ar@<0.3ex>[l]^{E_{l+3}}
}
$$
where $l\geq 0$, the successor of $E_1$ around $\chi_0$ is $E_s$, the direction is $\varepsilon=(-1,-1)$, and $2\leq \mu\leq m$.
 \item[\rm(6)]   The module 
 $X$ corresponds to the  path
 $$ \xymatrix@R=0.0000pt@C=30pt{	
		{\Circle}\ar[ddr]^{E_{1}} & & &  \\
		&{_{\chi_0}}&{_{\chi_1}}&{_{\chi_l}}&{_{\chi_{\Lambda}}} \\
		&{\Circle}\ar[dddl]^{\:\:E_{s}} \ar@<0.3ex>[r]^{E_2}&{\Circle}\ar@<0.3ex>[l]^{E_{s-1}}\ar@{.}[r]&{\Circle}\ar@<0.3ex>[r]^{E_{l+2}}&{\CIRCLE}\ar@<0.3ex>[l]^{E_{l+3}}\\
		&& &\\
		&& &\\
		{\Circle}& & & 
}
$$
where $l\geq 0$,  the successor of $E_1$ around $\chi_0$ is $E_s$, the direction is $\varepsilon=(-1,1)$, and the multiplicity is $2\leq \mu\leq m$.
 \item[\rm(7)]The module  $X$ corresponds to the  path
 $$ \xymatrix@R=0.0000pt@C=30pt{
		{\Circle}\ar[ddr]^{E_{1}} &  \\
		&{_{\chi_{\Lambda}}} \\
		&{\CIRCLE}\ar[dddl]^{\:\:E_{2}}\\
		&\\
		&\\
		{\Circle} &   
}
$$
where  the successor of $E_1$ around $\chi_{\Lambda}$ is $E_2$, the direction is $\varepsilon=(-1,1)$, and  the multiplicity is \smallskip $1\leq \mu\leq m-1$. 
\end{enumerate}
Each path in {\rm(3), \rm(4), \rm(5), \rm(6) or \rm(7)} gives rise to $m-1$ modules, distinguished by their multiplicity~$\mu$, unless the path is of type {\rm(3)(ii)}, in which case there are $m-2$ \smallskip modules.
\item Let $X$ be an indecomposable liftable $\bB$-module as in {\rm(c)},{\rm(3)}--{\rm(7)}.  Then the character of any lift of $X$ is of the form 
$$\sum_{i=0}^{l}\chi_i+\Xi\,,$$
 where  $\Xi$ is a sum of $\mu-1$ distinct exceptional characters if $X$ is as in {\rm(3)(i)}, {\rm(4), (5)}, or {\rm(6)}, whereas 
 the character of any lift of $X$ is of the form $\Xi\,,$ 
 where $\Xi$ is a sum of~$\mu$ distinct exceptional characters if $X$ is as in {\rm(3)(ii)}, or {\rm(7)}. 
\end{enumerate}
\end{thm}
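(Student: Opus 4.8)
Parts (a), (b), (d), and every type in (c) other than the multiplicity range stated in (c)(3)(ii) are \cite[Theorem~2.1]{HN12}. We first recall the shape of the argument there, and then explain the correction required when the exceptional vertex $\chi_{\Lambda}$ is a leaf of $\sigma(\bB)$.

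\smallskip
\noindent\emph{Strategy of the Hiss--Naehrig classification.} By Janusz' description \cite[\S5]{Jan69} every non-projective non-simple indecomposable $\bB$-module $X$ is given by a path on $\sigma(\bB)$, a direction and a multiplicity, as recalled above. A module $X$ is liftable precisely when its Brauer character is a non-negative integral combination of the rows of the decomposition matrix of $\bB$ \emph{and} such a combination is actually realised by an $\cO G$-lattice. For a cyclic block the decomposition matrix is read off $\sigma(\bB)$ via Green's walk \cite{GreenWalk}, so the first requirement is purely combinatorial, while the second is governed by the explicit $\cO$-lattice structure of the projective indecomposable $\bB$-modules. The case $e=1$, where $\bB$ is Morita equivalent to $k[D]$ and every module lifts (yielding $m+1$ of them), is immediate. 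For $e>1$ one uses that liftability is $\Omega$-stable --- a projective cover over $\cO$ reduces to a projective cover over $k$, and the kernel of a surjection of $\cO$-lattices is again a lattice --- together with the Green correspondence to the Brauer correspondent $\bb$ in $N_1$, reducing the combinatorics to the star-shaped situation of Lemma~\ref{lem:complength}; there the liftable modules are exactly the uniserial modules whose length is congruent to $0$ or $1$ modulo $e$. Transporting this back through Green's walk and the Bleher--Chinburg description \cite{BleChi} of the position of $X$ in $\Gamma_s(\bB)$ gives the distance characterisation (b), the list of paths (c)(1)--(7), and the character formulas (d). The delicate point, and the real obstacle, is the ``only if'' direction: one must exclude modules whose Brauer character is an admissible combination of decomposition rows but which admit no lattice lift (typically those whose ``support'' on the Brauer tree is branched rather than a path), and this is exactly where the structure theory of $\cO$-lattices over Brauer-tree orders enters.

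\smallskip
\noindent\emph{The correction in (c)(3)(ii).} Type (3)(ii) concerns exactly the configuration in which $l=0$ and the exceptional vertex $\chi_{\Lambda}$ is a leaf of $\sigma(\bB)$; since $e>1$, the other endpoint $\chi_0$ is then not a leaf. Write $S:=S_{E_1}$ for the simple $\bB$-module labelling the unique edge $E_1$ incident with $\chi_{\Lambda}$. The path of such a module $X$ only involves $E_1$ and bounces at the exceptional leaf $\chi_{\Lambda}$, so $X$ is uniserial with all composition factors isomorphic to $S$, and $\ell(X)=\mu$. Now invoke the description of the PIMs in \S\ref{ssec:PIMs}: a counter-clockwise walk around the valency-one exceptional vertex $\chi_{\Lambda}$ produces the uniserial summand $Q_{\Lambda}$ of $\rad(P_S)/\soc(P_S)$ of length $m-1$ with all factors isomorphic to $S$; hence the hook $H_{\Lambda}=\boxed{\begin{smallmatrix}S\\Q_{\Lambda}\end{smallmatrix}}$ is uniserial of length $m$ with all factors isomorphic to $S$, while the simple module labelling the leaf has length $1$. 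Since a uniserial module is determined by its composition series, the uniserial module of length $\mu$ with all factors isomorphic to $S$ coincides with $H_{\Lambda}$ when $\mu=m$ (already on the list as a type (2) module) and with the simple leaf when $\mu=1$ (on the list as type (2')). Consequently the modules genuinely of type (3) in this situation are precisely those with $2\leq\mu\leq m-1$, i.e.\ $m-2$ of them, which is the corrected statement; the range $2\leq\mu\leq m$ would double-count the hook.

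\smallskip
\noindent\emph{Consistency.} This affects neither the enumeration (a) nor the character formula (d). For (d): the uniserial all-$S$ module of length $\nu$ has a lift affording a sum of $\nu$ distinct exceptional characters for every $1\leq\nu\leq m$; the value $\nu=1$ is the single exceptional character of the type (2') module and $\nu=m$ is the full exceptional sum $\chi_{\Lambda}$ carried by the hook $H_{\Lambda}$, so what remains for type (3)(ii) is exactly the range $2\leq\nu\leq m-1$, matching the formula that $\Xi$ is a sum of $\mu$ distinct exceptional characters. For (a): relative to a count attributing $m-1$ modules to every type (3) path, the corrected count loses one module in each block in which $\chi_{\Lambda}$ is a leaf, but this is compensated exactly by the module of type (2'), which is the length-$1$ member of the same all-$S$ family; hence the total $e(2m+1)$ is unchanged.
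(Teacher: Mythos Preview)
Your proposal is correct and follows essentially the same approach as the paper: both recognise that when $\chi_{\Lambda}$ is a leaf, the simple module $E$ labelling it is \emph{not} a hook (the hook being the length-$m$ uniserial module with all factors~$E$), so that the original range $2\leq\mu\leq m$ in type~(3) would double-count the hook and the simple $E$ must be listed separately as type~(2'). The paper's own proof is terser---it simply says the r\^oles of~$E$ and the length-$m$ hook must be swapped in the original argument of \cite{HN12}---while you spell out the uniserial structure and verify the enumeration in~(a) is preserved; your sketch of the \cite{HN12} strategy is extra context the paper omits entirely (it just cites the reference), but the substantive correction is identical.
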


\begin{proof}
Only Part {\rm(c)(3)} and {\rm(d)} are modified with respect to the original statement.  Case {\rm(c)(2')} is added in order  to consider the case that the exceptional vertex is a leaf of $\sigma(\bB)$.  More precisely:
\begin{itemize}
\item[$\cdot$] if the exceptional vertex of $\sigma(\bB)$ is not a leaf, then there is nothing to change;
\item[$\cdot$] if the exceptional vertex of $\sigma(\bB)$ is a leaf, then the simple module $E$ labelling this leaf is not a hook, so that the r\^{o}les of $E$ and the hook with $\mu=m$ composition factors equal to $E$  need to be swapped in the original proof.
\end{itemize}
The form of the ordinary characters in {\rm(d)} is then an immediate consequence of the modification brought to {\rm(c)(3)}.
\end{proof}

\noindent Finally, we note that in Theorem~\ref{thm:classific_liftable} we have also slightly altered the original statement of \cite[Theorem 2.1]{HN12} by splitting the original case {\rm(c)(6)} into cases {\rm(c)(6)} and {\rm(c)(7)} in order to facilitate the computations in Appendix~\ref{Ap:distances} below.


\vspace{6mm}
\section{Distances  in the stable Auslander-Reiten quiver}\label{Ap:distances}\label{AppB}

Below 
 $d(X,H)$ denotes the distance in $\Gamma_s(\bB)$ from the non-projective indecomposable $\bB$-module $X$  to the boundary  to which  the  hook $H$ belongs (i.e. the length of a shortest path between both modules). We now go through the list of liftable $\bB$-modules provided by Theorem~\ref{thm:classific_liftable} and give their distances to one of the boundaries of  $\Gamma_s(\bB)$.
 \par
 If $e=1$, then by Theorem~\ref{thm:classific_liftable}(a) all non-projective indecomposable $\bB$-modules are liftable, and their distance to the boundary of of  $\Gamma_s(\bB)$ containing the unique simple $\bB$-module is given by their composition length minus 1 by Lemma~\ref{lem:complength}(c). Thus  we may assume that $e>1$. If $m=1$, then by Theorem~\ref{thm:classific_liftable}, the non-projective indecomposable  liftable $\bB$-modules are precisely the hooks, hence we may also assume that $m>1$.

\begin{prop}\label{prop:distances}
Let $\mathbf B$ be a cyclic block with defect group $D\cong C_{p^n}$, $e>1$ simple modules and exceptional multiplicity $m>1$. Let $X$ be a non-projective indecomposable liftable $\mathbf B$-module, which is  not a hook. 
\begin{enumerate}
\item[\rm(a)] 
Assume $\chi_{\Lambda}$ is a leaf of $\sigma(\bB)$, $X$ is the simple module labelling this leaf, and $H$ is the hook corresponding to $\chi_{\Lambda}$ (i.e. the uniserial module of length $m$ with composition factors all isomorphic to $X$), then 
$$d(X,H)=e(m-1)=|D|-1-e\,.$$
\item[\rm(b)]
Assume  $X$ corresponds to the  path
  $$
\xymatrix@R=0.0000pt@C=30pt{	
		{_{\chi_0}}&{_{\chi_1}}&{_{\chi_l}}&{_{\chi_{\Lambda}}}\\
		{\Circle} \ar@<0.3ex>[r]^{E_1}  &{\Circle}\ar@<0.3ex>[l]^{E_s}  \ar@{.}[r]    &{\Circle} \ar@<0.3ex>[r]^{E_{l+1}} & {\CIRCLE} \ar@<0.3ex>[l]^{E_{l+2}}
}
$$
where  the direction is $\varepsilon=(1,-1)$, $l\geq 0$, and $\chi_0$ is a leaf of $\sigma(\bB)$, so that $2\leq \mu\leq m$. Then the distance from $X$ to the hook $H:=E_1$ (simple) is 
$$d(X,H)=\begin{cases}
  e(m-\mu+1)    & \text{if $l$ is odd}, \\
   e(\mu-1)   & \text{if $l$ is even}.
\end{cases}$$
\item[\rm(b')]
Assume  $X$ corresponds to the  path
  $$
\xymatrix@R=0.0000pt@C=30pt{	
		{_{\chi_0}}&{_{\chi_{\Lambda}}}\\
		{\Circle} \ar@<0.3ex>[r]^{E_1}  &{\CIRCLE}\ar@<0.3ex>[l]^{E_2}  
}
$$
where $\chi_{\Lambda}$ is a leaf of $\sigma(\bB)$, the direction is $\varepsilon=(-1,1)$  and the multiplicity is $2\leq \mu\leq m-1$. Then the distance from $X$ to the unique hook $H$ which is uniserial of length $m$ and has composition factors all isomorphic to  $E_1=E_2$ is
 $$d(X,H)=e(m-\mu)\,.$$
\item[\rm(c)]
Assume  $X$ corresponds to the  path
$$  \xymatrix@R=0.0000pt@C=30pt{	
    {_{\chi_{u0}}}  &{_{\chi_0}}&{_{\chi_1}}&{_{\chi_l}}&{_{\chi_{\Lambda}}}\\
      {\Circle}  &{\Circle} \ar@<0.3ex>[r]^{E_1}  \ar@<0.3ex>[l]^{E_{s}}  &{\Circle}\ar@<0.3ex>[l]^{E_{s-1}}  \ar@{.}[r]&{\Circle}\ar@<0.3ex>[r]^{E_{l+1}}&{\CIRCLE}\ar@<0.3ex>[l]^{E_{l+2}}
}
$$
where $l\geq 0$, the successor of $E_1$ around $\chi_0$ is $E_s$, the direction is $\varepsilon=(1,1)$, and $2\leq \mu\leq m$.\\
Then the distance from $X$ to the unique hook  $H$ with  socle $E_1$ and head $E_s$  is
$$
d(X,H)=
\left\{
	\begin{array}{lr}
		e(m-\mu+1)&\text{if } l\mbox{ is odd,}\\
		e(\mu-1)&\text{if } l \mbox{ is even.}
	\end{array}\right.
$$
\item[\rm(d)]
 Assume  $X$ corresponds to the  path   
$$  \xymatrix@R=0.0000pt@C=30pt{	
    {_{\chi_{u0}}}  &{_{\chi_0}}&{_{\chi_1}}&{_{\chi_l}}&{_{\chi_{\Lambda}}}\\
      {\Circle} \ar@<0.3ex>[r]^{E_1}  &{\Circle}  \ar@<0.3ex>[r]^{E_{2}}  &{\Circle}\ar@<0.3ex>[l]^{E_{s}}  \ar@{.}[r]&{\Circle}\ar@<0.3ex>[r]^{E_{l+2}}&{\CIRCLE}\ar@<0.3ex>[l]^{E_{l+3}}
}
$$
where $l\geq 0$, the successor of $E_1$ around $\chi_0$ is $E_s$, the direction is $\varepsilon=(-1,-1)$, and $2\leq \mu\leq m$. Then the distance from $X$ to the unique hook  $H$ with  socle $E_1$ and head $E_s$ is 
$$
d(X,H)=
\left\{
	\begin{array}{lr}
		e(m-\mu+1)&\text{if } l\mbox{ is odd,}\\
		e(\mu-1)&\text{if } l \mbox{ is even.}
	\end{array}\right.
$$
\item[\rm(e)] 
Assume  $X$ corresponds to the  path
 $$ \xymatrix@R=0.0000pt@C=30pt{
 	{_{\chi_{u0}}}&& &\\
	{\Circle}\ar[ddr]^{E_{1}} & & &  \\
		&{_{\chi_0}}&{_{\chi_1}}&{_{\chi_l}}&{_{\chi_{\Lambda}}} \\
		&{\Circle}\ar[dddl]^{\:\:E_{s}} \ar@<0.3ex>[r]^{E_2}&{\Circle}\ar@<0.3ex>[l]^{E_{s-1}}\ar@{.}[r]&{\Circle}\ar@<0.3ex>[r]^{E_{l+2}}&{\CIRCLE}\ar@<0.3ex>[l]^{E_{l+3}}\\
		&& &\\
		&& &\\
		{\Circle}& & & \\
		{^{\chi_{d0}}}&& &
	}
$$
where $l\geq 0$,  the successor of $E_1$ around $\chi_0$ is $E_s$, the direction is $\varepsilon=(-1,1)$,  $2\leq \mu\leq m$.   Then the distance from $X$ to the unique hook  $H$ with socle $E_1$  and head $E_s$ is
$$
d(X,H)=
\left\{
	\begin{array}{lr}
		e(m-\mu+1)&\text{if } l\mbox{ is odd,}\\
		e(\mu-1)&\text{if } l \mbox{ is even.}
	\end{array}\right.
$$
\item[\rm(f)] 
Assume  $X$ corresponds to the  path
 $$ \xymatrix@R=0.0000pt@C=30pt{
                {_{\chi_{u0}}}& \\	
		{\Circle}\ar[ddr]^{E_{1}} &  \\
		&{_{\chi_{\Lambda}}} \\
		&{\CIRCLE}\ar[dddl]^{\:\:E_{2}}\\
		&\\
		&\\
		{\Circle}&  \\
		{^{\chi_{d0}}}& 
}
$$
where  the successor of $E_1$ around $\chi_{\Lambda}$ is $E_2$, the direction is $\varepsilon=(-1,1)$, and  $1\leq \mu\leq m-1$. 
Then the distance from $X$ to the unique hook  $H$ with socle $E_1$  and head $E_s$ is
$$d(X,H)=e(m-\mu)\,.$$
\end{enumerate}
In all cases, the distances to the other boundary of $\Gamma_s(\bB)$ is $me-1-d(X,H)$.
\end{prop}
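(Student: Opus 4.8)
The plan is to verify the seven cases {\rm(a)}--{\rm(f)} against the Bleher--Chinburg classification \cite{BleChi} of the positions of the non-projective indecomposable $\bB$-modules in $\Gamma_s(\bB)$, specialised to the liftable modules enumerated in Theorem~\ref{thm:classific_liftable}(c) (with the correction of Appendix~\ref{AppA}). First recall the ambient picture: $\Gamma_s(\bB)$ is the tube $(\IZ/e\IZ)A_{p^{n}-1}$, which has two boundaries, namely the two $\Omega^{2}$-orbits of hooks of $\bB$, one consisting of the hooks affording a positive character and the other of those affording a negative character (see \S\ref{ssec:PIMs} and Corollary~\ref{cor:positivity/negativity}). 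For any two hooks $H$, $H'$ lying on the two distinct boundaries and any non-projective indecomposable $X$ one has $d(X,H)+d(X,H')=(p^{n}-1)-1=em-1$, since $p^{n}-1$ is the number of rows of the tube. This already yields the very last assertion of the proposition, so it suffices to treat each $X$ in turn, compute its distance to one boundary, and identify on which boundary the designated hook $H$ lies.

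For the computation itself there are two equivalent points of view. On the one hand, \cite[\S2--\S3]{BleChi} express the distance of $X$ from the boundary containing a prescribed hook directly in terms of its top--socle sequence $(E_{1},\dots,E_{s})$, its direction $\varepsilon$, its multiplicity $\mu$, and the combinatorics of $\sigma(\bB)$; the resulting quantity is always $e$ times an affine function of $\mu$ whose exact shape is governed by the parity of the path-length parameter $l$ and by whether the path abuts the exceptional vertex or a non-exceptional leaf. On the other hand, the Green correspondence $f\colon \Gamma_s(\bB)\to\Gamma_s(\bb)$ is $\Omega^{2}$-equivariant, preserves vertices, sources and the distances to the boundaries (\S\ref{ssec:BrauerCorresp}), and carries $X$ to a \emph{uniserial} $\bb$-module; by Corollary~\ref{cor:d+GreenCorresp} and Lemma~\ref{lem:complength}(c) the distance of $X$ to the boundary of the positive hooks equals $\ell(f(X))-1$, and Green's walk around the Brauer tree \cite{GreenWalk} identifies $f(X)$ and hence its length. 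Either way one substitutes the explicit invariants of $X$, subtracts $1$, and -- using Corollary~\ref{cor:positivity/negativity} to decide whether $H$ is positive or negative -- converts the result into the distance to the boundary on which $H$ actually lies.

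Concretely: in case {\rm(a)} the module $X$ is the simple module sitting at the exceptional leaf; it is not a hook, so one first records, via the corrected Theorem~\ref{thm:classific_liftable}(c)(2'), that it is the module of type {\rm(2')}, and then reads off its location either from \cite{BleChi} or from the uniserial structure of its projective cover together with Green's walk; the designated hook $H$ is the uniserial hook of length $m$ with all composition factors isomorphic to $X$ obtained by walking around the exceptional vertex, and one obtains $d(X,H)=e(m-1)=|D|-1-e$. In cases {\rm(b)} and {\rm(b')}, $X$ is of type {\rm(3)}: in {\rm(b)} the leaf is $\chi_{0}$ and $H=E_{1}$ is the simple hook afforded by $\chi_{0}$, while in {\rm(b')} ($l=0$) the leaf is $\chi_{\Lambda}$ and $H$ is the length-$m$ uniserial hook with all composition factors $E_{1}=E_{2}$; in each case \cite{BleChi} gives the stated value, the two alternatives in {\rm(b)} corresponding to the parity of $l$. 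In cases {\rm(c)}, {\rm(d)}, {\rm(e)}, {\rm(f)}, the module $X$ is of type {\rm(4)}, {\rm(5)}, {\rm(6)}, {\rm(7)} respectively, and the designated hook $H$ is the unique hook with socle $E_{1}$ and head $E_{s}$ (respectively head $E_{2}$ in case {\rm(f)}); once $H$ has been pinned down, these four cases are formally identical to case {\rm(b)} and the distances are computed in the same way.

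The main obstacle is not any individual computation but the bookkeeping with the conventions of \cite{BleChi}: one must correctly identify the designated hook $H$ by its socle and head, determine the direction in which a shortest path from $X$ to that boundary runs, and keep track of the parity split induced by $l$. A further genuine point is the situation in which the exceptional vertex is a leaf of $\sigma(\bB)$: here \cite{HN12}'s original statement needed the correction recorded in Appendix~\ref{AppA}, and it is precisely case {\rm(a)} (and the modules of type {\rm(3)(ii)}) that must be re-examined against \cite{BleChi} with this correction in place. Once these conventions are fixed, each of the seven cases is a short, routine verification.
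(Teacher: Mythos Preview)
Your overall strategy coincides with the paper's: the proof is a case-by-case application of \cite[Theorem~3.5]{BleChi}. However, what you have written is a plan rather than a proof. The paper's argument, and the only real content of the proposition, lies in identifying for each type the specific Bleher--Chinburg data from \cite[Definitions~3.2 and~3.4]{BleChi}: the vertices $v_a,v_z$, the simples $S_a,S_z$, the walk $W_X$, its length $n$, and the parameter~$\eta$; one then reads off $d(X,H)=(n-1)/2+\eta e$. For instance, in~(a) one has $n=1$ and $\eta=m-1$; in (b),(c),(d),(e) one has $n=2e+1$ and $\eta\in\{m-\mu,\mu-2\}$ according to the parity of~$l$; in (b') and (f) one has $n=1$ and $\eta=m-\mu$. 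You never name these invariants or invoke the formula, so the ``short, routine verification'' you promise is precisely the part that is missing.

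Your proposed alternative via the Green correspondence does not deliver the proposition as stated. The identity $d^{+}(X)=\ell(f(X))-1$ from Corollary~\ref{cor:d+GreenCorresp} computes the distance to the \emph{positive} boundary, whereas the proposition asserts a formula for $d(X,H)$ with $H$ a structurally specified hook that may lie on either boundary, and the answer is required to be independent of the sign of~$H$. Converting $d^{+}(X)$ into $d(X,H)$ would require knowing whether $H$ is positive or negative, which depends on the particular block and is not encoded in the path/direction/multiplicity data alone; so this route cannot produce the sign-free formulae of the proposition without an independent computation of $d(X,H)$, which is exactly what \cite[Theorem~3.5]{BleChi} supplies. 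Moreover, Green's walk \cite{GreenWalk} describes the hooks and their Heller translates, not $f(X)$ for an arbitrary liftable~$X$, so your appeal to it to ``identify $f(X)$ and hence its length'' is unjustified. Drop the alternative and carry out the Bleher--Chinburg bookkeeping explicitly in each case.
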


\noindent Notice that in (b') we changed the direction with respect to the notation of Theorem~\ref{thm:classific_liftable}, but this is not relevant because (b') describes uniserial modules all of whose composition factors are isomorphic, and in particular have the same head and socle.

\begin{proof}
If $X$ is a hook, then $X$ lies on the boundary of $\Gamma_s(\bB)$ by \S\ref{ssec:PIMs}, hence (a).  In all other cases in order to determine $d(X,H)$ we apply \cite[Theorem 3.5]{BleChi}.   More precisely, following the notation of \cite[Definion~3.2 and Definition 3.4]{BleChi}, in each case we need to determine the labelling $v_a,v_z,S_a,S_z$, the walk $W_X$ and its length, and the \smallskip parameter~$\eta$.
	
\begin{enumerate}
\item[\rm(a)]  Because $X$ is simple, the walk $W_X$ of \cite[Definition 3.4]{BleChi} is of length $n=1$ and the parameter $\eta$ of \cite[Definition 3.4]{BleChi} is given by $\eta=m-1$. It follows from  \cite[Theorem 3.5]{BleChi} that 
$$d(X,H)=(n-1)/2+\eta e=e(m-1)\,.$$ 
\item[\rm(b)] In this case $v_a=\chi_1$, $v_z=\chi_0$, and $S_a=E_1=S_z$. It follows that the walk
	$W_X$ is of length $n=2e+1$. Moreover, the parameter $\eta$ is given by 
	\[\eta=\left\{\begin{array}{lr}
		m-\mu&l\mbox{ is odd}\\
		\mu-2&l\mbox{ is even}.
		\end{array}\right.
	\]
	Hence \cite[Theorem 3.5]{BleChi} yields
	$$d(X,H)=e(1+\eta)=\begin{cases}
  e(m-\mu+1)    & \text{if $l$ is odd}, \\
   e(\mu-1)   & \text{if $l$ is even}.
\end{cases}$$
\item[\rm(b')] In this case $v_a=\chi_{0}$, $v_z=\chi_{\Lambda}$, $S_a=E_2$, $S_z=E_1$. Thus, the walk
	$W_X$  is of length $n=1$. Moreover, $\eta=m-\mu$.  Hence \cite[Theorem 3.5]{BleChi} yields
	$$d(X,H)=(n-1)/2+\eta e=e(m-\mu)\,.$$
\item[\rm(c)] In this case $v_a=\chi_{u0}$, $v_z=\chi_{0}$, and $S_a=E_s=S_z$. Thus, the walk $W_X$  is of length $n=2e+1$, and the parameter $\eta$ is given by 
	\[\eta=\left\{\begin{array}{lr}
		m-\mu&l\mbox{ is odd}\\
		\mu-2&l\mbox{ is even}.
		\end{array}\right.
	\]
	Therefore, by \cite[Theorem 3.5]{BleChi}, 
	$$d(X,H)=(n-1)/2+\eta e= e+ \eta e =
	\begin{cases}
e(m-\mu+1)&\text{if } l\mbox{ is odd,}\\
		e(\mu-1)-1&\text{if } l \mbox{ is even.}
\end{cases}$$
\item[\rm(d)]   In this case $v_a=\chi_{1}$, $v_z=\chi_{0}$, and $S_a=E_s=S_z$. Thus, the walk $W_X$ is of length $n=2e+1$, and the parameter $\eta$ is given by 
	\[\eta=\left\{\begin{array}{lr}
		m-\mu&\text{if }l\mbox{ is odd}\\
		\mu-2&\text{if }l\mbox{ is even}.
		\end{array}\right.
	\]
Therefore, by \cite[Theorem 3.5]{BleChi}, 
	$$d(X,H)=(n-1)/2+\eta e= e+ \eta e =
	\begin{cases}
e(m-\mu+1)&\text{if } l\mbox{ is odd,}\\
		e(\mu-1)-1&\text{if } l \mbox{ is even.}
\end{cases}$$
 \item[\rm(e)] In this case, we see that $v_a=\chi_{d0}$, $v_z=\chi_{0}$, and $S_a=E_s=S_z$. Thus, the walk $W_X$  is of length $n=2e+1$, and the parameter $\eta$ is given by 
	\[\eta=\left\{\begin{array}{lr}
		m-\mu&\text{if } l\mbox{ is odd}\\
		\mu-2&\text{if } l\mbox{ is even}.
		\end{array}\right.
	\]
Therefore, by \cite[Theorem 3.5]{BleChi}, 
	$$d(X,H)=(n-1)/2+\eta e= e+ \eta e =
	\begin{cases}
e(m-\mu+1)&\text{if } l\mbox{ is odd,}\\
		e(\mu-1)-1&\text{if } l \mbox{ is even.}
\end{cases}$$
\item[\rm(f)]  In this case $v_a=\chi_{d0}$, $v_z=\chi_{\Lambda}$, and $S_a=E_2=S_z$. Thus, the walk $W_X$  is of length $n=1$, and  $\eta=m-\mu$.  Therefore, \cite[Theorem 3.5]{BleChi} yields 
	$$d(X,H)=e\eta = e(m-\mu)\,.$$
\end{enumerate} 
\end{proof}
\bigskip

\vspace{1cm}

\noindent\textbf{Acknowledgments.} The authors are grateful to Natalie Naehrig  for allowing them to include in Appendix B an expanded version of her preliminary computations on the distances of liftable modules to the rim of the stable Auslander-Reiten quiver, and to Shigeo Koshitani for numerous examples/discussions on the stable Auslauder-Reiten quiver of cyclic blocks since February 2014. 
The authors also wish to thank Markus Linckelmann for his detailed explanations on the source algebras of cyclic blocks, and Klaus Lux for pointing out useful references.   The second author is grateful to the Lehrstuhl~D f\"ur Mathematik of the RWTH Aachen for its hospitality during the preparation of the present article. Finally the authors would like to thank the referee for several helpful comments that have improved the exposition of the paper.

\bigskip
\bigskip






\end{document}